\documentclass{amsart}

\usepackage[utf8]{inputenc}  
\usepackage[T1]{fontenc} 
\usepackage{amsmath,amssymb,amsthm} 
\usepackage[pagebackref=false]{hyperref} 
\usepackage{booktabs}
\usepackage{microtype} 
\usepackage{csquotes}
\usepackage[inline]{enumitem} 
    \setlist[enumerate]{label=\textup{(\roman*)}}
\usepackage{tikz}
\usepackage{nicematrix}
\usepackage[all,cmtip]{xy} 
\usepackage[T1]{fontenc}
\usepackage[utf8]{inputenc}
\usepackage{indentfirst}
\usepackage{enumitem}
\usepackage{amsthm,amsmath,amssymb}
\usepackage{mathrsfs}
\usepackage{geometry}
\usepackage{xcolor}
\usepackage{hyperref}
\usepackage{url}
\usepackage{comment}
\hypersetup{hypertex=true,
colorlinks=true,
linkcolor=blue,
anchorcolor=black,
citecolor=red}

\setenumerate{itemsep=0pt,topsep=0pt,parsep=0pt,partopsep=0pt}

\title[$p$-Adic $\mathbb{Z}^d$-odometers of adding type]{On free minimal constant speedups violating continuous orbit equivalence in $p$-adic $\mathbb{Z}^d$-odometers of adding type}
\author{Chang-Hua JIAO}
\date{\today}
\begin{document}
\newtheorem{theorem}{Theorem}[section]
\newtheorem{corollary}[theorem]{Corollary}
\newtheorem{lemma}[theorem]{Lemma}
\newtheorem{proposition}[theorem]{Proposition}

\theoremstyle{definition}
\newtheorem{definition}[theorem]{Definition}
\newtheorem{example}[theorem]{Example}
\newtheorem{conjecture}[theorem]{Conjecture}
\newtheorem{question}[theorem]{Question}

\theoremstyle{remark}
\newtheorem{remark}{Remark}

\theoremstyle{plain}
\renewcommand{\labelenumi}{(\theenumi)}
\renewcommand{\labelenumiii}{(\theenumiii)}

\begin{abstract}
Let $\mathbb{Z}_p$ be the ring of $p$-adic integers with respect to a prime $p$ and let $d$ be a positive integer.
For each $\mathbf{z}=(z_1, z_2,..., z_d) \in \mathbb{Z}_p^d$, let $T_{\mathbf{z}}: \mathbb{Z}^d \times \mathbb{Z}_p \to \mathbb{Z}_p$ be an adding-type $\mathbb{Z}^d$-action on $\mathbb{Z}_p$ defined by $T^{\mathbf{n}}_{\mathbf{z}}(x):=x+ \sum_{i=1}^d n_i z_i$ for $\mathbf{n}=(n_1,n_2, \cdots, n_d) \in \mathbb{Z}^d$ and $x \in \mathbb{Z}_p$.
Under some mild assumptions on $\mathbf{z}$, the action $T_{\mathbf{z}}$ is a free $\mathbb{Z}^d$-odometer (by odometer, we mean a minimal and equicontinuous action on a Cantor space).
In this paper, we derive a necessary condition for continuous orbit equivalence between such $\mathbb{Z}^d$-odometers by constructing algebraic models for them.
We then study the free minimal constant speedups of these $\mathbb{Z}^d$-odometers.
It turns out that such a speedup of $T_{\mathbf{z}}$ is again an adding-type $p$-adic $\mathbb{Z}^d$-odometer $T_{\mathbf{w}}$ for some $\mathbf{w} \in \mathbb{Z}_p^d$. 
However, the necessary condition above may not hold for the speedup.
This provides the first known examples of free minimal bounded speedups (of free $\mathbb{Z}^d$-odometers) which are not continuously orbit equivalent to the original ones and hence disproves a conjecture by Johnson and McClendon.
Our result also indicates that continuous orbit equivalence is a rare phenomenon for free minimal constant speedups of $p$-adic $\mathbb{Z}^d$-odometers of adding type when $d \geqslant 2$.
\end{abstract}
\subjclass{37B05, 37P20}
\maketitle

\tableofcontents

\section{Introduction}

\textbf{History and motivation.}
\emph{Orbit equivalence} (see Definition \ref{Definition_conjugacy_isomorphism_orbit_equivalence} for the topological version) is a pivotal notion in dynamical systems. 
Its development in ergodic theory (i.e., measurable dynamical systems) has a long history and is now relatively well-established.
Later, the study of orbit equivalence was extended to topological dynamics, with minimal Cantor systems emerging as a particularly rich testing ground, e.g. \cite{Topological_orbit_equivalence_and_C^*_crossed_products,Orbit_equivalence_for_Cantor_minimal_Z^2_systems,Orbit_equivalence_for_Cantor_minimal_Z^d_systems}.
In the topological setting, \emph{continuous orbit equivalence} (see Definition \ref{Definition_continuous_orbit_equivalence}) arises naturally, adhering to the principle that everything should respect continuity.
There are many studies related to continuous orbit equivalence; for example, see \cite{Bounded_topological_orbit_equivalence_and_C^*_algebras,Continuous_orbit_equivalence_rigidity,Amenable_minimal_Cantor_systems_of_free_groups_arising_from_diagonal_actions,Z^d_odometers_and_cohomology,On_Z^d-odometers_associated_to_integer_matrices}.

A notion similar to orbit equivalence is called \emph{speedup} (see Definition \ref{Definition_speedup} for topological version).
Roughly speaking, a speedup of a dynamical system is a new dynamical system derived from the original one through a certain procedure.
Research on speedups also initially focused on measurable dynamical systems, dating back to Neveu's work \cite{1_Neveu_1969,2_Neveu_1969} in 1969.
In the past decade, \emph{topological speedups} (i.e., speedups of topological dynamics) have gradually attracted increasing attention since Ash's PhD thesis \cite{Topological_Speedups_PhD_thesis}.

The similarity between orbit equivalence and topological speedup in minimal Cantor systems was further confirmed by the recent work of Ash and Ormes in \cite{Topological_speedups_for_minimal_Cantor_systems}.
Within this similarity, continuous orbit equivalence naturally corresponds to \emph{bounded speedups} (which is another equivalent term for \emph{continuous speedups} mostly, see Lemma \ref{Lemma_equivalent_conditions_for_bounded_speedups}) for $\mathbb{Z}^d$-actions.
Several results on bounded speedups of $\mathbb{Z}$-actions have already been established, mostly aiming to find the properties the speedups will inherit, see \cite{Bounded_topological_speedups,Bratteli_diagrams_for_bounded_topological_speedups,alvin2026minimalboundedspeedupstoeplitz,bruin2026speedupslinearlyrecurrentsubshifts}.
For example, it was shown by Alvin, Ash, and Ormes that a minimal bounded speedup of a $\mathbb{Z}$-odometer is topologically conjugate to the original one (see Theorem 3.3 in \cite{Bounded_topological_speedups}).

When speedups of $\mathbb{Z}^d$-actions are considered, things get much more complicated.
As proved by Johnson and McClendon in \cite{Topological_speedups_of_Z^d_actions},
a free minimal bounded speedup to $\mathbb{Z}^{d'}$ of a free $\mathbb{Z}^{d}$-odometer is again a free 
$\mathbb{Z}^{d'}$-odometer (see Theorem \ref{Theorem_Corrected_version_of_a_result_by_JM}, corrected version Theorem 5.3 in \cite{Topological_speedups_of_Z^d_actions}\footnote{The freeness assumption is missing from the original statement of Theorem 5.3 in \cite{Topological_speedups_of_Z^d_actions}, which makes it incorrect.
We give a correction to it as Theorem \ref{Theorem_Corrected_version_of_a_result_by_JM} in Subsection 2.4.}), while such speedup may not be conjugate (even not isomorphic) to the original one (see Theorem 5.5 therein).
They also showed that, for two continuously orbit equivalent $\mathbb{Z}^d$-odometers, one is always a minimal bounded speedup for the other one (see Theorem 5.7 therein).
For the converse, they left the following conjecture (in particular, for 2-dimensional case).

\begin{conjecture}[Revised version of Conjecture 5.6 in \cite{Topological_speedups_of_Z^d_actions}\footnote{In its original statement of Conjecture 5.6 in \cite{Topological_speedups_of_Z^d_actions}, there is no freeness assumption either, which makes it false in an obvious way.
So we add the freeness assumption for the speedup and thus make it more sensible.
}] \label{Conjecture_continuousorbit_equivalence_for_free_speedups}
For an integer $d \geqslant 2$, let $(X,T)$ be a $\mathbb{Z}^d$-odometer and $S: \mathbb{Z}^d \curvearrowright X $ be a free and minimal bounded speedup of $(X,T)$.
Then $(X,S)$ is continuously orbit equivalent to $(X,T)$.
\end{conjecture}

In this paper, we will borrow some ideas in $p$-adic dynamical systems to disprove this conjecture.
We consider a special class of $\mathbb{Z}^d$-odometers on $\mathbb{Z}_p$ (the ring of $p$-adic integers), which are called \emph{$p$-adic $\mathbb{Z}^d$-odometers of adding type}.
These $\mathbb{Z}^d$-odometers are natural high-dimensional analogues of the $p$-adic adding machine, a classical and well-studied $\mathbb{Z}$-odometer on $\mathbb{Z}_p$.
One motivation to consider such $\mathbb{Z}^d$-odometers comes from the author's collaborative work \cite{Affine_semigroup_dynamical_systems_on_Z_p}, where we studied the minimality of semigroup actions on $\mathbb{Z}_p$ of affine type, extending some minimality results in \cite{p_adic_affine_dynamical_systems_and_applications,Strict_ergodicity_of_affine_p_adic_dynamical_systems_on_Z_P} for affine $\mathbb{Z}$-actions on $\mathbb{Z}_p$. 
There are many other works on $p$-adic dynamical systems, for example \cite{Minimality_of_p_adic_rational_maps_with_good_reduction,On_minimal_decomposition_of_p_adic_homographic_dynamical_systems,On_minimal_decomposition_of_p_adic_polynomial_dynamical_systems,Minimal_polynomial_dynamics_on_the_set_of_3_adic_integers,Shadowing_and_stability_in_p_adic_dynamics,Ergodicity_criteria_for_non-expanding_transformations_of_2_adic_spheres}.
We also refer the readers to the classical monograph \cite{Applied_algebraic_dynamics} by Anashin and Khrennikov.

\textbf{$p$-Adic $\mathbb{Z}^d$-odometers of adding type.}
Here we briefly introduce the object of our study.
Some terms will be formally defined in the next preliminary section.

Let $\mathbb{Z}_p$ be the ring of $p$-adic integers with respect to the prime $p$ and $d$ be a positive integer.
For each $\mathbf{z}=(z_1, z_2,..., z_d)^t \in \mathbb{Z}_p^d$, let $T_{\mathbf{z}}: \mathbb{Z}^d \times \mathbb{Z}_p \to \mathbb{Z}_p$ be an adding-type $\mathbb{Z}^d$-action on $\mathbb{Z}_p$ defined by $$T^{\mathbf{n}}_{\mathbf{z}}(x):=x+\mathbf{n \cdot z}=x+ \sum_{i=1}^d n_i z_i$$ for $\mathbf{n}=(n_1,n_2, \cdots, n_d)^t \in \mathbb{Z}^d$ and $x \in \mathbb{Z}_p$.
Here the superscript $t$ denotes the transpose of the matrix (or in particular, the vector).

For $\mathbf{z}=(z_1, z_2, \cdots z_d)^t \in \mathbb{Z}_p^d \setminus \{\mathbf{0}\}$, put 
$$v_p(\mathbf{z}):= \min_{1 \leqslant i \leqslant d} v_p(z_i)=\max\{n \geqslant 0: p^n | z_i, \ \forall 1 \leqslant i \leqslant d\}.$$
By convention, put $v_p(\mathbf{0})=+\infty$.
Note that the $\mathbb{Z}^d$-action $(\mathbb{Z}_p, T_{\mathbf{z}})$ is minimal if and only if $v_p(\mathbf{z})=0$; i.e., $z_i \in \mathbb{Z}_p^{\times}$ is a $p$-adic unit for some $1 \leqslant i \leqslant d$.
Moreover, $(\mathbb{Z}_p, T_{\mathbf{z}})$ is free if and only if the coordinates of $\mathbf{z}$ are $\mathbb{Z}$-independent (or equivalently, $\mathbb{Q}$-independent) meaning that for all $\lambda_1, \lambda_2,...,\lambda_d \in \mathbb{Z}$ (or in $\mathbb{Q}$), we have $$\sum_{i=1}^d \lambda_i z_i =0 \iff \lambda_1=\lambda_2=\cdots =\lambda_d=0.$$
For short, we say $\mathbf{z}$ is $\mathbb{Z}$-\emph{independent}.

\textbf{Results and strategy.}
In this paper, we consider a special class of bounded speedups, called constant speedups (see Definition \ref{Definition_of_constant_speedup}).
They are perhaps the most tractable but non-trivial ones, since it is easy to see that constant speedups to $\mathbb{Z}^{d_2}$ of a $\mathbb{Z}^{d_1}$-action are in bijective correspondence with the matrices in $\mathrm{Mat}_{d_2 \times d_1}(\mathbb{Z})$.
Our main result below indicates that even for constant speedups, Conjecture \ref{Conjecture_continuousorbit_equivalence_for_free_speedups} is false; i.e., a free minimal constant speedup to $\mathbb{Z}^d$ of a free $\mathbb{Z}^d$-odometer is not necessarily continuously orbit equivalent to the original one.
Our counterexample to Conjecture \ref{Conjecture_continuousorbit_equivalence_for_free_speedups} is established via $p$-adic $\mathbb{Z}^d$-odometers of adding type, introduced above.
Indeed, we can prove that for such topological dynamical systems, Conjecture \ref{Conjecture_continuousorbit_equivalence_for_free_speedups} is false in a typical sense.

Recall that a property is said to hold for a \emph{generic} point in a topological space $X$ if the points in $X$ for which the property holds form a residual set, meaning that it contains a countable intersection of open dense subsets of $X$.
Our main result is as follows and will be finally proved as Theorem \ref{Theorem_main_theorem_residuality_of_exception_set} in Subsection 4.2 (with different but equivalent formulations).

\newtheorem*{theorema}{Theorem A} 
\begin{theorema} \label{Theorem_main_theorem}
    For every integer $d \geqslant 2$ and prime $p$, there is $\mathbf{C} \in \mathrm{Mat}_{d \times d}(\mathbb{Z})$ such that for generic $\mathbf{z} \in \mathbb{Z}_p^d \setminus p\mathbb{Z}_p^d$, we have 
    \begin{enumerate}
        \item $(\mathbb{Z}_p, T_{\mathbf{z}})$ is a free $\mathbb{Z}^d$-odometer;
        \item the constant speedup corresponding to $\mathbf{C}$ of $(\mathbb{Z}_p, T_{\mathbf{z}})$ is a free $\mathbb{Z}^d$-odometer, which is not continuously orbit equivalent to $(\mathbb{Z}_p,T_{\mathbf{z}})$.
    \end{enumerate}
\end{theorema}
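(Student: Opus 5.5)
Theorem A splits into three tasks: identify the constant speedup, find a computable invariant of continuous orbit equivalence for adding-type odometers, and show that for a suitable $\mathbf{C}$ the invariant differs for generic $\mathbf{z}$.

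\textbf{Step 1: the speedup is again of adding type.} For $\mathbf{C}\in\mathrm{Mat}_{d\times d}(\mathbb{Z})$, the constant speedup sends $\mathbf{n}$ to $T_{\mathbf{z}}^{\mathbf{C}\mathbf{n}}$. Its action is $x\mapsto x+(\mathbf{C}\mathbf{n})\cdot\mathbf{z}=x+\mathbf{n}\cdot(\mathbf{C}^t\mathbf{z})$, so the speedup equals $T_{\mathbf{w}}$ with $\mathbf{w}=\mathbf{C}^t\mathbf{z}$.

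\textbf{Step 2: (i) and freeness/minimality of $T_{\mathbf{w}}$ hold generically.} By the criteria recalled in the introduction, we need:
\begin{itemize}
\item $v_p(\mathbf{z})=0$ and $v_p(\mathbf{w})=0$ (minimality);
\item $\mathbf{z}$ and $\mathbf{w}$ are $\mathbb{Z}$-independent (freeness).
\end{itemize}
If $\det\mathbf{C}\neq0$, then $\mathbf{w}$ is $\mathbb{Z}$-independent exactly when $\mathbf{z}$ is. The set of $\mathbb{Z}$-independent $\mathbf{z}$ is the complement of countably many closed, nowhere dense sets $\{\boldsymbol\lambda\cdot\mathbf{z}=0\}$ with $\boldsymbol\lambda\in\mathbb{Z}^d\setminus\{\mathbf{0}\}$, hence residual. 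The condition $v_p(\mathbf{C}^t\mathbf{z})=0$ for all $\mathbf{z}$ with $v_p(\mathbf{z})=0$ holds when $\mathbf{C}^t$ is invertible mod $p$; even if it is not, it defines a clopen set, which is all we need.

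\textbf{Step 3: a necessary condition for continuous orbit equivalence.} This is the step I expect to be the main obstacle. The natural algebraic model is the following. The odometer $T_{\mathbf{z}}$ is the inverse limit of the finite actions $\mathbb{Z}^d\to\mathbb{Z}/p^k$, $\mathbf{n}\mapsto\mathbf{n}\cdot\mathbf{z}\bmod p^k$. Equivalently, it is the profinite completion of $\mathbb{Z}^d$ along the chain of finite-index subgroups
\[
\Lambda_k=\{\mathbf{n}:\mathbf{n}\cdot\mathbf{z}\equiv0 \bmod p^k\}.
\]
A continuous orbit equivalence between free $\mathbb{Z}^d$-odometers induces a continuous cocycle. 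Continuous cocycles on odometers are locally constant, so they factor through a finite level. The cocycle at that level is then cohomologous to a homomorphism $\mathbf{A}\in GL_d(\mathbb{Z})$ after passing to deeper levels; here I would invoke the cohomology rigidity used in the cited work on $\mathbb{Z}^d$-odometers and cohomology, or prove it directly by a Følner-averaging argument. The expected conclusion is: if $T_{\mathbf{z}}$ and $T_{\mathbf{w}}$ are continuously orbit equivalent, then there exist $\mathbf{A}\in GL_d(\mathbb{Z})$ and $u\in\mathbb{Z}_p^\times$ with
\[
\mathbf{w}=u\,\mathbf{A}\mathbf{z}
\]
(possibly up to a translation, which does not affect $\mathbf{w}$). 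Put differently, the $\mathbb{Z}$-module $\mathbb{Z}z_1+\cdots+\mathbb{Z}z_d\subset\mathbb{Z}_p$ is determined up to multiplication by a unit. The delicate point is controlling the cocycle near the finite-level boundaries so that it becomes a genuine matrix in $GL_d(\mathbb{Z})$ rather than merely a rational matrix.

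\textbf{Step 4: choose $\mathbf{C}$ and show the condition fails generically.} Take
\[
\mathbf{C}=\mathrm{diag}(q,1,\ldots,1)
\]
for a prime $q\neq p$. Then $\det\mathbf{C}$ is a $p$-adic unit, so minimality is preserved, and $\mathbf{w}=(qz_1,z_2,\ldots,z_d)$.

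Suppose $\mathbf{w}=u\mathbf{A}\mathbf{z}$ with $\mathbf{A}\in GL_d(\mathbb{Z})$. Then $(\mathbf{C}^t-u\mathbf{A})\mathbf{z}=0$. For each fixed $\mathbf{A}$, this says $\mathbf{z}$ is a null vector of a matrix pencil, which forces $u$ to be an eigenvalue of $\mathbf{A}^{-1}\mathbf{C}^t$ with $\mathbf{z}$ an eigenvector. Since $\mathbf{A}^{-1}\mathbf{C}^t$ has algebraic eigenvalues, $\mathbf{z}$ must lie in a finite union of eigenspaces over $\mathbb{Q}_p$. An eigenspace of dimension $d$ would require $\mathbf{A}^{-1}\mathbf{C}^t=u\mathbf{I}$; then $u^d=\pm\det\mathbf{C}=\pm q$, which with $u\in\mathbb{Q}$ is impossible for $d\geq2$. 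Hence each eigenspace is a proper subspace, and so closed and nowhere dense in $\mathbb{Z}_p^d$. Taking the union over the countably many $\mathbf{A}$ gives a meagre exceptional set. Its complement, intersected with the residual set from Step 2, is the generic set required for (ii).
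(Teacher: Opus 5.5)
\textbf{The gap is Step 3: the necessary condition you aim for is false.} The condition ``$\mathbf{w}=u\mathbf{A}\mathbf{z}$ with $\mathbf{A}\in\mathrm{GL}_d(\mathbb{Z})$, $u\in\mathbb{Z}_p^\times$'' is exactly the criterion for \emph{isomorphism} of $T_{\mathbf{z}}$ and $T_{\mathbf{w}}$, i.e.\ conjugacy after a $\mathrm{GL}_d(\mathbb{Z})$ change of the acting group. For $d\geqslant 2$, continuous orbit equivalence is strictly weaker. By Lemma~\ref{Lemma_continuous_orbit_equivalence_for_Z^d_odometers}, free $\mathbb{Z}^d$-odometers are continuously orbit equivalent iff $\mathbf{P}\mathcal{H}(X_1,T_1)=\mathcal{H}(X_2,T_2)$ for some $\mathbf{P}\in\mathrm{GL}_d(\mathbb{Q})$ with $\det\mathbf{P}=\pm1$, and this $\mathbf{P}$ need not be integral. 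In your cocycle picture, passing to a deeper level yields (up to coboundary) only a homomorphism defined on a finite-index subgroup $G_n$, i.e.\ a rational matrix that is integral on $G_n$; this is why $\mathcal{H}=\bigcup_n G_n^*$ appears. So the ``delicate point'' you flag cannot be resolved.

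Here is a concrete counterexample. Let $d=2$, let $\beta\in p\mathbb{Z}_p$ be transcendental, put $\mathbf{z}=(1,p\beta)^t$, and put $\mathbf{w}=(1+\beta,p\beta)^t=\mathbf{P}\mathbf{z}$ with $\mathbf{P}=\begin{pmatrix}1&1/p\\0&1\end{pmatrix}$.
\begin{itemize}
\item From Proposition~\ref{Proposition_determining_first_cohomology_group} one gets $\mathcal{H}(\mathbb{Z}_p,T_{\mathbf{y}})=\mathbb{Z}[1/p]^2\cap(\mathbb{Z}_p^2+\mathbb{Q}_p\mathbf{y})$ for $\mathbf{y}\in\{\mathbf{z},\mathbf{w}\}$.
\item Since $\mathbf{P}\mathbf{e}_2-\frac{1}{p(1+\beta)}\mathbf{w}$ and $\mathbf{P}^{-1}\mathbf{e}_2+\frac{1}{p}\mathbf{z}$ both lie in $\mathbb{Z}_p^2$, we get $\mathbf{P}(\mathbb{Z}_p^2+\mathbb{Q}_p\mathbf{z})=\mathbb{Z}_p^2+\mathbb{Q}_p\mathbf{w}$.
\item As $\mathbf{P}\in\mathrm{GL}_2(\mathbb{Z}[1/p])$, this yields $\mathbf{P}\mathcal{H}(\mathbb{Z}_p,T_{\mathbf{z}})=\mathcal{H}(\mathbb{Z}_p,T_{\mathbf{w}})$, so these two free odometers are continuously orbit equivalent.
\item Yet suppose $\mathbf{w}=u\mathbf{A}\mathbf{z}$ with $\mathbf{A}=\begin{pmatrix}a&b\\c&e\end{pmatrix}$ integral. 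Eliminating $u$ and comparing coefficients of $1,\beta,\beta^2$ forces $c=0$ and $e=a=bp$, hence $\det\mathbf{A}=b^2p^2\neq\pm1$.
\end{itemize}
So Step 3 cannot be proved. Step 4, built on it, would only show that the speedup is not isomorphic to $T_{\mathbf{z}}$, which does not touch the conjecture.

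What is true, and is all you need, is the weaker statement of Theorem~\ref{Theorem_necessary_condition_for_continuous_orbit_equivalence}: $\mathbf{w}=\lambda\mathbf{P}\mathbf{z}$ for some $\lambda\in\mathbb{Q}_p^\times$ and $\mathbf{P}\in\mathrm{GL}_d(\mathbb{Q})$ with $\det\mathbf{P}=\pm1$. It needs no cocycle analysis.
\begin{itemize}
\item Take your chain $\Lambda_k$ as the algebraic model. Then $\Lambda_k^*=\mathbb{Z}^d+p^{-k}\mathbb{Z}\mathbf{z}^{(k)}$: one inclusion is clear, and both sides have index $p^k$ over $\mathbb{Z}^d$.
\item Hence the $p$-adic closure of $\mathcal{H}(\mathbb{Z}_p,T_{\mathbf{z}})$ is $\mathbb{Z}_p^d+\mathbb{Q}_p\mathbf{z}$. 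The only nonzero $\mathbb{Q}_p$-subspace it contains is $\mathbb{Q}_p\mathbf{z}$, because all its points lie within bounded distance of this line.
\item Taking closures in $\mathbf{P}\mathcal{H}(\mathbb{Z}_p,T_{\mathbf{z}})=\mathcal{H}(\mathbb{Z}_p,T_{\mathbf{w}})$ then gives $\mathbb{Q}_p\mathbf{P}\mathbf{z}=\mathbb{Q}_p\mathbf{w}$.
\end{itemize}
Your Step 4 then goes through verbatim with $\mathbf{P}$ in place of $\mathbf{A}$. The set $\{\mathbf{P}\in\mathrm{GL}_d(\mathbb{Q}):\det\mathbf{P}=\pm1\}$ is still countable, and $\mathbf{P}^{-1}\mathbf{C}^t=\lambda I$ forces $\lambda\in\mathbb{Q}$ with $\lambda^d=\pm q$, which is impossible for $d\geqslant2$. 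That is exactly the paper's proof, which uses $\mathrm{diag}(1,\dots,1,r)$ with $r\in\{2,3\}$ prime to $p$ in place of your $\mathrm{diag}(q,1,\dots,1)$.

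A minor point in Step 2: a proper clopen subset of $\mathbb{Z}_p^d\setminus p\mathbb{Z}_p^d$ is not residual there, so ``clopen is all we need'' is wrong in general. It is harmless here only because your $\det\mathbf{C}=q$ is a $p$-adic unit.
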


As a consequence, we disprove Conjecture \ref{Conjecture_continuousorbit_equivalence_for_free_speedups}.

Theorem A (or its equivalent version, Theorem \ref{Theorem_main_theorem_residuality_of_exception_set}) is based on an auxiliary result stated as follows, which provides a necessary condition for continuous orbit equivalence between (free) $p$-adic $\mathbb{Z}^d$-odometers of adding-type.

\begin{theorem} \label{Theorem_necessary_condition_for_continuous_orbit_equivalence}
    Let $\mathbf{z}, \mathbf{w} \in \mathbb{Z}_p^d$ be $\mathbb{Z}$-independent with $v_p(\mathbf{z})=v_p(\mathbf{w})=0$.
    If the free $\mathbb{Z}^d$-odometers $(\mathbb{Z}_p, T_{\mathbf{z}})$ and $(\mathbb{Z}_p, T_{\mathbf{w}})$ are continuously orbit equivalent,
    then there is $\lambda \in \mathbb{Q}_p^{\times}=\mathbb{Q}_p \setminus\{0\}$ and $\mathbf{P} \in \mathrm{GL}_{d}(\mathbb{Q})$ with $\det(\mathbf{P})=\pm 1$ such that $\mathbf{w}=\lambda \mathbf{P} \mathbf{z}$.
\end{theorem}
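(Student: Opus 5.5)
Two continuously orbit equivalent $\mathbb{Z}^d$-odometers have isomorphic "algebraic models". I would extract from a continuous orbit equivalence an isomorphism of the associated profinite groups (or of their dense subgroups), and then read off the relation $\mathbf{w}=\lambda\mathbf{P}\mathbf{z}$.

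\textbf{Setup and cocycles.} Let $h:\mathbb{Z}_p\to\mathbb{Z}_p$ be a homeomorphism with cocycles $a,b$, so that $h(x+\mathbf{n}\cdot\mathbf{z})=h(x)+a(\mathbf{n},x)\cdot\mathbf{w}$. Here $a:\mathbb{Z}^d\times\mathbb{Z}_p\to\mathbb{Z}^d$ is continuous, hence locally constant in $x$. Both actions are free, so $a$ is a genuine cocycle and $a(\mathbf{n},\cdot)$ is invertible via $b$. The dense subgroups $\Gamma_{\mathbf{z}}:=\mathbb{Z}^d\cdot\mathbf{z}=\{\mathbf{n}\cdot\mathbf{z}\}\subset\mathbb{Z}_p$ and $\Gamma_{\mathbf{w}}$ are free abelian of rank $d$ by $\mathbb{Z}$-independence.

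\textbf{Step 1: the cocycle is cohomologous to a constant.} I would use the standard fact that a continuous $\mathbb{Z}^d$-valued cocycle over an odometer is cohomologous to a homomorphism, or argue directly as follows. For each $\mathbf{n}$, the map $x\mapsto a(\mathbf{n},x)$ is constant on cosets of $p^k\mathbb{Z}_p$ for some $k$. The key is to pass to a finite-index subgroup. Take $k$ such that $a(\mathbf{e}_i,\cdot)$ is $p^k$-periodic for all $i$. Let $L=\{\mathbf{n}:\mathbf{n}\cdot\mathbf{z}\in p^k\mathbb{Z}_p\}$, which has index $p^k$ since $v_p(\mathbf{z})=0$. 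For $\mathbf{m}\in L$ the cocycle identity gives $a(\mathbf{m},x)=\sum_j a(\mathbf{e}_{i_j},x+\text{partial sums})$, and the partial sums range over a fixed finite set of cosets independent of $x$ modulo $p^k$. Averaging over the $p^k$ cosets, which are equidistributed, I expect to find that $x\mapsto a(\mathbf{m},x)$ restricted to $L$ differs from a homomorphism $A:L\to\mathbb{Z}^d$ by a coboundary $c(x+\mathbf{m}\cdot\mathbf{z})-c(x)$, with $c$ locally constant. Rather than coboundaries, I would use the $L$-subaction on a clopen coset $p^k\mathbb{Z}_p$, which is itself an adding-type odometer, and repeat the argument there. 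The cleaner route is to consider $\phi(x):=h(x)-h(0)$ and note that for $\mathbf{m}\in L'$, a suitably small finite-index sublattice, $\phi(x+\mathbf{m}\cdot\mathbf{z})-\phi(x)$ is locally constant in $x$ with values in $\Gamma_{\mathbf{w}}$. It is then in fact constant by minimality plus continuity, which gives additivity along $L'\cdot\mathbf{z}$. This is the main obstacle. I would first try to establish that the derivative $\phi(x+\mathbf{m}\cdot\mathbf{z})-\phi(x)$ is independent of $x$ for $\mathbf{m}$ in a finite-index sublattice, using that it takes values in the countable set $\Gamma_{\mathbf{w}}$ and that the cocycle sums over large blocks cancel the periodic part.

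\textbf{Step 2: from additivity to the linear relation.} Once $h(x+\gamma)=h(x)+\theta(\gamma)$ holds for $\gamma\in\Gamma':=L'\cdot\mathbf{z}$ (dense), continuity forces $h$ to be affine on each coset of the closure. Thus $h(x)=\lambda x+c$ on a clopen set, with $\lambda\in\mathbb{Q}_p^\times$ since $h$ is a homeomorphism, and $\theta$ extends to multiplication by $\lambda$. Hence $\lambda L'\cdot\mathbf{z}\subset\Gamma_{\mathbf{w}}$, i.e.\ $\lambda\mathbf{z}=\mathbf{Q}^t\mathbf{w}$ for some $\mathbf{Q}\in\mathrm{Mat}_d(\mathbb{Q})$ with bounded denominators. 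Symmetrically, using $h^{-1}$, we get the reverse inclusion. Comparing the indices $[\Gamma_{\mathbf{w}}:\lambda L'\mathbf{z}]$ computed via $p$-adic measure (Haar measure scales by $|\lambda|_p$) against the index of $L'$, where both are controlled by the fact that $h$ maps the $L'$-orbit structure onto the corresponding orbit structure of $\mathbf{w}$ bijectively between clopen sets of equal measure ratio, yields $|\det\mathbf{P}|=1$ after absorbing a power of $p$ into $\lambda$. Rationality of $\mathbf{P}$ follows from $\mathbb{Z}$-independence, which gives uniqueness of coefficients. Bookkeeping the determinant via Haar measure is the delicate point, and I would handle it by choosing $L'$ to be the stabilizer of a clopen coset $p^k\mathbb{Z}_p$ whose image under $h$ is exactly a coset $p^{k'}\mathbb{Z}_p$, so that both index counts are powers of $p$ that match.
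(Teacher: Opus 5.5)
There is a genuine gap, at exactly the step you call the main obstacle. Your route differs from the paper's: the paper never touches the cocycle. It applies the Giordano--Putnam--Skau/Li criterion $\mathbf{P}\mathcal{H}(\mathbb{Z}_p,T_{\mathbf{z}})=\mathcal{H}(\mathbb{Z}_p,T_{\mathbf{w}})$ with $\det\mathbf{P}=\pm1$, computes $\mathcal{H}$ from the algebraic model, and notes that its $p$-adic closure $\mathbb{Z}_p^d+\mathbb{Q}_p\mathbf{z}$ contains a unique $\mathbb{Q}_p$-line. A direct rigidity argument can replace this, but your Step 1 is not proved, and the reason you give is not valid. ``Minimality plus continuity'' does not make a locally constant function constant. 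Minimality only forces constancy of continuous functions that are \emph{invariant} under the action, and you never establish invariance; the averaging/coboundary discussion does not supply it. The missing idea is the commutation identity for the cocycle. Since $\mathbf{w}$ is $\mathbb{Z}$-independent, $a(\mathbf{m}+\mathbf{n},x)=a(\mathbf{n},x)+a(\mathbf{m},x+\mathbf{n}\cdot\mathbf{z})=a(\mathbf{m},x)+a(\mathbf{n},x+\mathbf{m}\cdot\mathbf{z})$, so
\[
a(\mathbf{m},x+\mathbf{n}\cdot\mathbf{z})-a(\mathbf{m},x)=a(\mathbf{n},x+\mathbf{m}\cdot\mathbf{z})-a(\mathbf{n},x).
\]
If the maps $a(\pm\mathbf{e}_i,\cdot)$ are constant on cosets of $p^k\mathbb{Z}_p$, then so is every $a(\mathbf{n},\cdot)$. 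For $\mathbf{m}\in L=\{\mathbf{m}:\mathbf{m}\cdot\mathbf{z}\in p^k\mathbb{Z}_p\}$ the right-hand side therefore vanishes. So $a(\mathbf{m},\cdot)$ is $T_{\mathbf{z}}$-invariant and continuous, hence equal to a constant $A(\mathbf{m})$ by minimality. The resulting $A:L\to\mathbb{Z}^d$ is a homomorphism, and it is injective because $h$ is. With this in hand your linearization in Step 2 is fine: $h(x+y)=h(x)+\lambda y$ for $x\in\mathbb{Z}_p$ and $y\in p^k\mathbb{Z}_p$, and $\lambda\mathbf{z}=\mathbf{Q}^t\mathbf{w}$, where $\mathbf{Q}\in\mathrm{GL}_d(\mathbb{Q})$ is the $\mathbb{Q}$-linear extension of $A$.

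The determinant step also needs repair. ``Absorbing a power of $p$ into $\lambda$'' changes $\det\mathbf{P}$ only by factors $p^{\pm jd}$, so it cannot correct an arbitrary power-of-$p$ discrepancy. What you need is that there is no discrepancy, and this follows from three facts.
\begin{enumerate}
\item $|\lambda|_p=1$. Indeed, $h$ carries the $p^k$ cosets of $p^k\mathbb{Z}_p$ onto disjoint balls of radius $|\lambda|_p p^{-k}$ that partition $\mathbb{Z}_p$; compare Haar measures. Hence $h(p^k\mathbb{Z}_p)=h(0)+p^k\mathbb{Z}_p$.
\item $A(L)=L_{\mathbf{w}}:=\{\mathbf{n}\in\mathbb{Z}^d:\mathbf{n}\cdot\mathbf{w}\in p^k\mathbb{Z}_p\}$. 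The inclusion $\subseteq$ comes from $A(\mathbf{m})\cdot\mathbf{w}=\lambda\,\mathbf{m}\cdot\mathbf{z}$. For $\supseteq$: the point $h(0)+\mathbf{n}\cdot\mathbf{w}\in h(p^k\mathbb{Z}_p)$ lies in the $T_{\mathbf{w}}$-orbit of $h(0)$, so it equals $h(\mathbf{m}\cdot\mathbf{z})$ for some $\mathbf{m}\in L$, and $\mathbb{Z}$-independence of $\mathbf{w}$ gives $\mathbf{n}=A(\mathbf{m})$.
\item $[\mathbb{Z}^d:L]=[\mathbb{Z}^d:L_{\mathbf{w}}]=p^k$, because $v_p(\mathbf{z})=v_p(\mathbf{w})=0$.
\end{enumerate}
Thus $\mathbf{Q}$ maps a lattice isomorphically onto one of the same covolume, so $\det\mathbf{Q}=\pm1$, and $\mathbf{P}:=(\mathbf{Q}^t)^{-1}$ works with no rescaling. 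Completed this way, your proof is self-contained: it avoids both the cited $\mathcal{H}$-classification and the algebraic model. It also yields more than the theorem: $\lambda\in\mathbb{Z}_p^\times$, $\mathbf{P}$ has entries in $\mathbb{Z}[1/p]$, and $h$ is piecewise affine. The paper's argument is shorter given the classification, and it stays within the $\mathcal{H}$-framework that it reuses in Section 5.
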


This result will be proved in Section 3.
Our proof is based on the framework established in \cite{Z^d_odometers_and_cohomology} by Giordano, Putnam, and Skau.
To achieve this, we construct the algebraic model for the adding-type $p$-adic $\mathbb{Z}^d$-odometer $(\mathbb{Z}_p, T_{\mathbf{z}})$ (see Proposition \ref{Proposition_construction_of_algebraic_model}) and then compute $\mathcal{H}(\mathbb{Z}_p, T_{\mathbf{z}})$ (see Proposition \ref{Proposition_determining_first_cohomology_group}) which is the key (additive) subgroup of $\mathbb{Q}^d$, perhaps first introduced in \cite{Z^d_odometers_and_cohomology} to classify $\mathbb{Z}^d$-odometers up to continuous orbit equivalence (see our preliminary Subsection 2.3).

To apply general classification results in \cite{Z^d_odometers_and_cohomology} and \cite{On_Z^d-odometers_associated_to_integer_matrices}, one difficulty is that 
$\mathcal{H}(\mathbb{Z}_p, T_{\mathbf{z}})$ is a little bit complicated.
Our novel step is treating $\mathcal{H}(\mathbb{Z}_p, T_{\mathbf{z}})$ locally.
Formally, we consider the closure of $\mathcal{H}(\mathbb{Z}_p, T_{\mathbf{z}})$ in $\mathbb{Q}_p^d$ (vector space over $p$-adic field $\mathbb{Q}_p$ rather than $\mathbb{Q}$), denoted by $\mathcal{H}_p(\mathbb{Z}_p, T_{\mathbf{z}})$.
It turns out that $\mathcal{H}_p(\mathbb{Z}_p, T_{\mathbf{z}})$ is much easier: it is just the $\mathbb{Z}_p^{d}$-enlargement of the one-dimensional subspace along $\mathbf{z}$; i.e., $\mathcal{H}_p(\mathbb{Z}_p, T_{\mathbf{z}}) = \mathbb{Z}_p^d + \mathbb{Q}_p \mathbf{z}$ (see Proposition \ref{Lemma_determining_p_adic_closure_of_first_cohomology_group}).
This then easily leads to Theorem \ref{Theorem_necessary_condition_for_continuous_orbit_equivalence} by applying results in \cite{Z^d_odometers_and_cohomology} and \cite{On_Z^d-odometers_associated_to_integer_matrices}.

Note that the inverse of Theorem \ref{Theorem_necessary_condition_for_continuous_orbit_equivalence} is not true (see Section 5 for more discussion) and we leave it as an open problem to find a sufficient and necessary condition for this.

\textbf{Organization.} The rest of the paper is structured as follows.
In Section 2, we collect all preliminaries to $p$-adic numbers, topological dynamical systems, $\mathbb{Z}^d$-odometers, and topological speedups (with a correction to a result in \cite{Topological_speedups_of_Z^d_actions}).
Section 3 is devoted to the construction of the algebraic models for $p$-adic $\mathbb{Z}^d$-odometers of adding type and the proof of Theorem \ref{Theorem_necessary_condition_for_continuous_orbit_equivalence}.
In Section 4, we consider the free minimal constant speedups for $\mathbb{Z}^d$-odometers and prove our main result (Theorem A).
Finally, in the last section, we present an example to show our necessary condition in Theorem \ref{Theorem_necessary_condition_for_continuous_orbit_equivalence} is not sufficient.

\section{Preliminaries}

\subsection{$p$-adic numbers}

Let $p$ be a prime number. For a nonzero integer $n \in \mathbb{Z}$, denote by $v_p(n)$ the largest nonnegative integer $k$ such that $p^k$ divides $n$.
The $p$-adic absolute value on $\mathbb{Z}$ is defined by $|n|_p=p^{-v_p(n)}$ for $n \neq 0$ and $|0|_p = 0$.
It can be extended to $\mathbb{Q}$ by
$$\left|\frac{a}{b}\right|_p=\frac{|a|_p}{|b|_p}, \quad  \forall a,b \in \mathbb{Z},\ b \neq 0.$$
It is easy to check the following ultrametric inequality
$$|x+y|_p \leqslant \max\{|x|_p,|y|_p\}, \quad \forall x,y \in \mathbb{Q}.$$
Hence $|\cdot|_p$ induces a metric on $\mathbb{Q}$, called the $p$-adic metric.

The field of $p$-adic numbers $\mathbb{Q}_p$ is defined to be the completion of $\mathbb{Q}$ with respect to the $p$-adic metric.
Every non-zero element of $\mathbb{Q}_p$ can be represented by formal series
$z=\sum_{n=N}^{\infty} a_n p^n,$
where $N \in \mathbb{Z}$ and $a_n \in \{0,1, \dots ,p-1 \}$ for each $n \geqslant N$ and $a_N \neq 0$.
Note that now we have $v_p(z)=N$ and $|z|_p=p^{-N}$.
The ring of $p$-adic integers is defined by
$$\mathbb{Z}_p = \{x\in\mathbb{Q}_p:|x|_p\leqslant 1\}.$$
Note that every element of $\mathbb{Z}_p$ admits a unique expansion $z=\sum_{i \geqslant 0}a_ip^i$ with $ a_i \in \{0,1, \dots ,p-1\}$ for each $i \geqslant 0$.
This implies that $\mathbb{Z}_p = \varprojlim \mathbb{Z}/ p^n \mathbb{Z}$.
Note that
$\mathbb{Q}_p = \bigcup_{m \geqslant 0} p^{-m} \mathbb{Z}_p.$
In particular, every nonzero element $x \in \mathbb{Q}_p$ can be written uniquely
as $ x=p^k u,$
where $k \in \mathbb{Z}$ and $u \in \mathbb{Z}_p^{\times}= \mathbb{Z}_p \setminus p \mathbb{Z}_p$ is a $p$-adic unit.

The topology of $\mathbb{Q}_p$ is generated by the balls
$$B(a,p^{-n}) = \{x \in \mathbb{Q}_p:|x-a|_p \leqslant p^{-n}\}, \quad a \in \mathbb{Q}_p, \ n \in \mathbb{Z}. $$
Unlike the Euclidean case, these balls are both open and closed (i.e., clopen), and any two balls are either disjoint or one contains the other.
For $d \geqslant 1$, the product metric on $\mathbb{Q}_p^d$ is defined by $$\mathrm{dist}(\mathbf{z}, \mathbf{w})= \sum_{i=1}^{d} \mathrm{dist}(z_i, w_i)= \sum_{i=1}^{d} |z_i-w_i|_p,$$
where $\mathbf{z}=(z_1, z_2, \cdots, z_d)^{t}, \mathbf{w}=(w_1, w_2, \cdots, w_d)^{t} \in \mathbb{Q}_p^d$.
Note that $ K \subset \mathbb{Q}_p^d$ is compact if and only if $K$ is closed and bounded with respect to $\mathrm{dist}$.
In particular, $\mathbb{Z}_p^d$ is a compact open (additive) subgroup of $\mathbb{Q}_p^d$.

\subsection{Topological dynamical systems}
Let $G$ be a group. A (topological) $G$-system $(X,T)$ consists of a compact topological space $X$ and a $G$-action $T$ on $X$; i.e., $T^g \in \mathrm{Homeo}(X)$ for all $g \in G$. We also write $T: G \curvearrowright X$ or just $G \curvearrowright X$ when $T$ is clear. For a $G$-system $(X,T)$, the orbit of $x \in X$ is $Gx=Orb_{G}(x)=Orb_{T}(x)=\{T^gx:g \in G\}$. $T$ is said to be transitive if there is a dense orbit and is said to be minimal if every orbit is dense in $X$. 

$(Y,T)$ is said to be a $G$-subsystem of $(X,T)$ if $Y\subset X$ is nonempty and $G$-invariant, i.e., $T^g(Y)=Y,\ \forall g \in G$. Note that every $G$-system admits a minimal $G$-subsystem.

\begin{definition} \label{Definition_conjugacy_isomorphism_orbit_equivalence}
    Let $T: G \curvearrowright X$ and $S: H \curvearrowright Y$. \begin{itemize}
    \item $T$ and $S$ are \emph{(topologically) conjugate} if $G=H$ and there is a homeomorphism $\Phi:X \to Y$ such that $\Phi \circ T^g=S^g \circ \Phi, \ \forall g \in G=H.$
    \item $T$ and $S$ are \emph{isomorphic} if there is a group isomorphism $\tau: G \to H$ and a homeomorphism $\Phi:X \to Y$ such that  $\Phi \circ T^g=S^{\tau(g)} \circ \Phi, \ \forall g \in G.$
    \item $T$ and $S$ are \emph{orbit equivalent} if there is a homeomorphism $\Phi:X \to Y$ such that $\Phi(Gx)=H\Phi(x)$ for all $x \in X$.
\end{itemize}
\end{definition}

To further introduce continuous orbit equivalence, we first define freeness for dynamical systems.

\begin{definition}
    $T: G \curvearrowright X$ is said to be \emph{free} if for each $x \in X$,  $T^g x=x$ implies $g=e_G$ (the identity element of $G$).
\end{definition}

 Note that, once two free systems $T,S$ are orbit equivalent via a homeomorphism $\Phi: X \to Y$, there are two uniquely defined maps $\alpha:G \times X \to H$ and $\beta: H \times Y \to G$ such that $\Phi(T^gx)=S^{\alpha(g,x)}\Phi(x)$ and $\Phi^{-1}(S^hy)=T^{\beta(h,y)}\Phi^{-1}(y)$ for $g\in G,h\in H, x \in X, y \in Y$. These two maps $\alpha, \beta$ are called \emph{orbit cocycles} associated to $\Phi$.

\begin{definition} \label{Definition_continuous_orbit_equivalence}
    Suppose that $G,H$ are topological groups. Let $T: G \curvearrowright X$ and $S: H \curvearrowright Y$ be two free systems. If there is a homeomorphism $\Phi:X \to Y$ such that $T$ and $S$ are orbit equivalent via $\Phi$ and the corresponding orbit cocycles are continuous, we say $T$ and $S$ are \emph{continuously orbit equivalent}.
\end{definition}

Note that all equivalence relations introduced above preserve minimality.

\subsection{$\mathbb{Z}^d$-odometers}

By Cantor system, we mean a dynamical system whose underlying space is a Cantor space (i.e., a totally disconnected, metrizable, compact space without isolated points).

\begin{definition} [See, for example, Definition 2.2 in \cite{Orbit_equivalence_rigidity_of_equicontinuous_systems}] \label{Definition_odometers}
    A $G$-\emph{odometer} is an equicontinuous minimal Cantor $G$-system. 
    Here \emph{equicontinuity} means that $\{T^g : g \in G\} \subset \mathrm{Homeo}(X)$ is an equicontinuous family; that is, for any $\varepsilon>0$, there is $\delta>0$ such that $$\mathrm{d}_X(T^g x,T^g y)< \varepsilon, \quad \forall g \in G$$
    once $\mathrm{d}_X(x,y)< \delta$, where $\mathrm{d}_X$ is the metric on $X$.
\end{definition}

For basic discussions about $G$-odometers, we refer the readers to \cite{G_odometers_and_their_almost_one_to_one_extensions}.

It is known that (with mild assumptions on $G$) every $G$-odometer is conjugate to an inverse limit of a sequence of $G$-actions on quotient spaces determined by a sequence of finite-index subgroups of $G$ (See, for example, \cite{Type_invariants_for_non-abelian_odometers})
The latter one is called the \emph{algebraic model}.
We formalize this result when $G=\mathbb{Z}^d$ as follows. 
 
\begin{theorem} [See, for examples, Theorem 1.4 in \cite{Type_invariants_for_non-abelian_odometers} and Theorem 2.7 in \cite{Orbit_equivalence_rigidity_of_equicontinuous_systems}] \label{Theorem_algebraic_model_for_odometers}
     A $\mathbb{Z}^d$-system $(X,T)$ is a $\mathbb{Z}^d$-odometer if there is a decreasing sequence of subgroups $\mathfrak{G}=(G_n)_{n\geqslant1}$ of $\mathbb{Z}^d$, each of which has finite index, such that \begin{itemize}
        \item  $\lim_{n \to \infty}[\mathbb{Z}^d:G_n] =\infty$;
        
        \item (X,T) is conjugate to $$(X_{\mathfrak{G}},\sigma_{\mathfrak{G}}):=\varprojlim (\mathbb{Z}^d/G_n, \sigma_{n}),$$ where $(\mathbb{Z}^d/G_n, \sigma_{n})$ is a $\mathbb{Z}^d$-system given by $\sigma_n^{\mathbf{v}}(\mathbf{w}+G_n)=(\mathbf{v}+\mathbf{w})+G_n$.
    \end{itemize}
\end{theorem}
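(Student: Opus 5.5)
The plan is to verify directly that the inverse limit $(X_{\mathfrak{G}},\sigma_{\mathfrak{G}})$ is an equicontinuous minimal Cantor $\mathbb{Z}^d$-system, and then to observe that each of these properties is invariant under topological conjugacy. Concretely, I would realize
$$X_{\mathfrak{G}}=\Big\{(x_n)_{n\geqslant 1}\in \prod_{n\geqslant1}\mathbb{Z}^d/G_n:\ x_{n+1}\mapsto x_n \text{ under the quotient } \mathbb{Z}^d/G_{n+1}\to\mathbb{Z}^d/G_n\Big\}.$$
This is a closed subset of a countable product of finite discrete sets, so it is compact, metrizable and totally disconnected. I would equip it with the ultrametric $\mathrm{d}(x,y)=2^{-\min\{n:\,x_n\neq y_n\}}$.

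The next step is to rule out isolated points. A basic clopen set is a cylinder $[a]_n=\{x: x_n=a\}$. The map $\mathbb{Z}^d/G_m\to\mathbb{Z}^d/G_n$ has fibers of size $[G_n:G_m]=[\mathbb{Z}^d:G_m]/[\mathbb{Z}^d:G_n]$, and this tends to $\infty$ as $m\to\infty$ because $\lim_n[\mathbb{Z}^d:G_n]=\infty$. Every element of a finite level extends to a point of the inverse limit, since the bonding maps are surjective. Hence every cylinder contains at least two (indeed infinitely many) points, so $X_{\mathfrak{G}}$ is a Cantor space.

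For minimality, I would note that the action is coordinatewise translation, $\sigma^{\mathbf{v}}(x)_n=x_n+\mathbf{v}$. Given $x$ and a cylinder $[a]_n$, choose $\mathbf{v}\in\mathbb{Z}^d$ with $x_n+\mathbf{v}=a$ in $\mathbb{Z}^d/G_n$. This is possible because $\mathbb{Z}^d$ acts transitively on the finite quotient. Then $\sigma^{\mathbf{v}}x\in[a]_n$, so every orbit is dense. For equicontinuity, the condition $x_n=y_n$ holds if and only if $x_n+\mathbf{v}=y_n+\mathbf{v}$. Therefore each $\sigma^{\mathbf{v}}$ is an isometry of $\mathrm{d}$, and the family is trivially equicontinuous with $\delta=\varepsilon$.

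Finally, if $\Phi:X\to X_{\mathfrak{G}}$ conjugates $T$ to $\sigma_{\mathfrak{G}}$, then $X$ is a Cantor space and minimality transfers along the bijection of orbits. Equicontinuity transfers because $\Phi$ and $\Phi^{-1}$ are uniformly continuous on compact spaces. This gives the stated implication.

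If one also wants the converse direction implicit in the cited references, the plan is as follows. Use equicontinuity to build a decreasing sequence of finite clopen partitions $\mathcal{P}_n$ that are $T$-invariant and have mesh tending to $0$; the invariance comes from taking unions of $\varepsilon$-balls of an invariant compatible metric. By minimality, $\mathbb{Z}^d$ permutes the atoms of $\mathcal{P}_n$ transitively. Let $G_n$ be the stabilizer of the atom containing a fixed base point $x_0$. Then $x\mapsto(\text{atom of }x\text{ in }\mathcal{P}_n)_n$ is the desired conjugacy onto $\varprojlim\mathbb{Z}^d/G_n$. I expect the main obstacle to be this construction of invariant clopen partitions, that is, upgrading equicontinuity to an invariant ultrametric. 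I would try to obtain it by setting $\rho(x,y)=\sup_{\mathbf{g}}\mathrm{d}_X(T^{\mathbf{g}}x,T^{\mathbf{g}}y)$ and using total disconnectedness to make the balls of $\rho$ clopen.
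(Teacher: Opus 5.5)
The paper gives no proof of this statement; it quotes it from the cited references. Your direct verification of the direction that is literally stated is correct and complete:
\begin{itemize}
\item $X_{\mathfrak{G}}$ is a nonempty closed subset of a product of finite sets.
\item The fibre count $[G_n:G_m]\to\infty$ rules out isolated points. It works together with the points $(\mathbf{v}+G_m)_{m\geqslant1}$, which realise any prescribed coset at any level.
\item Transitivity of translation on each $\mathbb{Z}^d/G_n$ gives minimality.
\item Each $\sigma^{\mathbf{v}}$ is an isometry of your ultrametric, which gives equicontinuity.
\item All three properties pass through the conjugacy, by uniform continuity of $\Phi^{\pm1}$.
\end{itemize}
This is exactly the direction the paper uses when it declares $(X_{\mathbf{z}},\sigma_{\mathbf{z}})$ to be a $\mathbb{Z}^d$-odometer and transfers this to $(\mathbb{Z}_p,T_{\mathbf{z}})$ via Proposition~\ref{Proposition_construction_of_algebraic_model}. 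The citation buys more: the sentence preceding the theorem, and the definition of $\mathcal{H}(X,T)$ for an arbitrary free odometer, rely on the converse (every $\mathbb{Z}^d$-odometer has an algebraic model). The formal statement, as written with ``if'', does not assert that converse.

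Your sketch of the converse follows the standard route, but make the clopen-ball step precise. For an arbitrary compatible metric, the balls of $\rho(x,y)=\sup_{\mathbf{g}}\mathrm{d}_X(T^{\mathbf{g}}x,T^{\mathbf{g}}y)$ need not be clopen. So start from a compatible ultrametric $\mathrm{d}_X$, which exists on any Cantor space.
\begin{itemize}
\item Then $\rho$ is a $T$-invariant ultrametric, compatible with the topology by equicontinuity.
\item Its $\varepsilon$-balls form finite $T$-invariant clopen partitions, nested as $\varepsilon\downarrow0$.
\item Alternatively, set $x\sim y$ when $T^{\mathbf{g}}x$ and $T^{\mathbf{g}}y$ lie in the same atom of a fixed clopen partition for every $\mathbf{g}$. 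Equicontinuity makes the classes open.
\end{itemize}
Also note two small points:
\begin{itemize}
\item Since $\mathbb{Z}^d$ is abelian, the stabilizer $G_n$ is the same for every atom.
\item $[\mathbb{Z}^d:G_n]=\#\mathcal{P}_n\to\infty$, because $X$ has no isolated points and the mesh tends to $0$.
\end{itemize}
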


Note that the odometer $(X_{\mathfrak{G}},\sigma_{\mathfrak{G}})$ is free if and only if $\cap_{n \geqslant 1}G_n = \{ \mathbf{0} \}$.

We then review some results in \cite{Z^d_odometers_and_cohomology} and \cite{On_Z^d-odometers_associated_to_integer_matrices}, related to continuous orbit equivalence between free $\mathbb{Z}^d$-odometers.
To begin with, a \emph{lattice} $\Lambda \subset \mathbb{R}^d$ is a discrete additive subgroup with $\mathbb{R}^d/\Lambda$ compact.
 The \emph{dual} of a lattice $\Lambda$ is $$\Lambda^*:=\{\mathbf{v}\in \mathbb{R}^d: \mathbf{v\cdot w} \in \mathbb{Z},\ \forall \mathbf{w} \in \Lambda\},$$ which is clearly a lattice. 
For a free $\mathbb{Z}^d$-odometer $(X_{\mathfrak{G}}, \sigma_{\mathfrak{G}})$ w.r.t. $\mathfrak{G}=(G_n)_{n \geqslant 1}$, we put 
$$\mathcal{H}(X_{\mathfrak{G}}, \sigma_{\mathfrak{G}}):= \bigcup_{n \geqslant 1} G_n^*.$$
For a general free $\mathbb{Z}^d$-odometer $(X,T)$, we put $$\mathcal{H}(X,T):= \mathcal{H}(X_{\mathfrak{G}}, \sigma_{\mathfrak{G}})$$
where $(X_{\mathfrak{G}}, \sigma_{\mathfrak{G}})$ is an algebraic model of $(X,T)$.
It is well-defined according to Theorem 4.1 in \cite{On_Z^d-odometers_associated_to_integer_matrices} (see also Theorem 1.5 in \cite{Z^d_odometers_and_cohomology} for low dimensional cases), which asserts that two free odometers $(X_{\mathfrak{G}}, \sigma_{\mathfrak{G}})$ and $(X_{\mathfrak{H}}, \sigma_{\mathfrak{H}})$ are topologically conjugate if and only if $\mathcal{H}(X_{\mathfrak{G}}, \sigma_{\mathfrak{G}})=\mathcal{H}(X_{\mathfrak{H}}, \sigma_{\mathfrak{H}})$.

Note that $\mathbb{Z}^d \subset \mathcal{H}(X, T) \subset \mathbb{Q}^d$ as additive groups and $\mathcal{H}(X,T)$ is dense in $\mathbb{Q}^d$ since $(X,T)$ is free.
This group plays an important role in classifications of $\mathbb{Z}^d$-odometers.
For example, we will apply the following result later.

\begin{lemma}[Proposition 3.4 in \cite{On_Z^d-odometers_associated_to_integer_matrices}, see also Theorem 1.5 in \cite{Z^d_odometers_and_cohomology} for low dimensional cases] \label{Lemma_continuous_orbit_equivalence_for_Z^d_odometers}
   Let $(X_i,T_i)$ be free $\mathbb{Z}^{d_i}$-odometers ($i=1,2$). 
   Then they are continuously orbit equivalent if and only if $d_1=d_2=:d$ and there is $\mathbf{P} \in \mathrm{GL}_{d}(\mathbb{Q})$ with $\det(\mathbf{P})=\pm1$ such that $\mathbf{P}\mathcal{H}(X_1,T_1)=\mathcal{H}(X_2,T_2)$.
\end{lemma}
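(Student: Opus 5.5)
The plan is to pass through the first cohomology $H^1(\mathbb{Z}^d, C(X,\mathbb{Z}))$ of a free odometer, identify it with $\mathcal{H}(X,T)$, and show that a continuous orbit equivalence induces a $\mathbb{Q}$-linear identification of these groups. By Theorem \ref{Theorem_algebraic_model_for_odometers}, we may assume $(X_i,T_i)=(X_{\mathfrak{G}^{(i)}},\sigma_{\mathfrak{G}^{(i)}})$ are algebraic models.

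\emph{Step 1: cohomology equals $\mathcal{H}$.} A continuous cocycle $c:\mathbb{Z}^{d}\times X\to\mathbb{Z}$ takes finitely many values on generators, hence factors through some level $\mathbb{Z}^d/G_n$. On the finite system $\mathbb{Z}^d/G_n$, every cocycle is cohomologous to a function whose restriction to $G_n$ is a homomorphism $G_n\to\mathbb{Z}$, namely $\mathbf{w}\mapsto \mathbf{v}\cdot\mathbf{w}$ with $\mathbf{v}\in G_n^*$. We then check that the passage $n\to n+1$ is just the inclusion $G_n^*\subset G_{n+1}^*$. This yields $H^1(X,T)\cong\bigcup_n G_n^*=\mathcal{H}(X,T)$. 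The isomorphism sends the coordinate homomorphisms $\mathbf{n}\mapsto n_j$ to the standard basis of $\mathbb{Z}^d$, and so $\mathcal{H}\otimes\mathbb{Q}=\mathbb{Q}^d$ has rank $d$.

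\emph{Step 2: the forward direction.} Let $\Phi$ be a continuous orbit equivalence with cocycles $\alpha,\beta$. Then $f\mapsto f\circ(\alpha,\Phi)$ gives mutually inverse group isomorphisms $H^1(X_2)\to H^1(X_1)$. Comparing ranks from Step 1 gives $d_1=d_2$. Since $\alpha$ is continuous, it factors through some finite level. Passing to a deeper common level $G_n$ where $\alpha$ becomes a homomorphism $A:G_n\to\mathbb{Z}^d$ up to coboundary, the induced map on $\mathcal{H}$ is the restriction of the $\mathbb{Q}$-linear map $A^t$. This gives $\mathbf{P}\in\mathrm{GL}_d(\mathbb{Q})$ with $\mathbf{P}\mathcal{H}(X_1)=\mathcal{H}(X_2)$.

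To get $\det\mathbf{P}=\pm1$, we use unique ergodicity. Haar measures satisfy $\Phi_*\mu_1=\mu_2$, because orbit equivalence preserves invariant measures for free actions with continuous cocycles; this is the usual Radon–Nikodym-free argument via clopen partitions. A clopen coset of $G_n$ has measure $1/[\mathbb{Z}^d:G_n]$, and its image is a union of cosets of $A(G_n)$ of the same total measure. Comparing indices then forces $|\det A|=1$.

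\emph{Step 3: the converse.} Given $\mathbf{P}$, set $H_n:=(\mathbf{P}G_n^*)^*$. The equality $\mathbf{P}\mathcal{H}(X_1)=\mathcal{H}(X_2)$ together with Theorem 4.1 of \cite{On_Z^d-odometers_associated_to_integer_matrices} shows $X_2$ is conjugate to the model with subgroups $H_n=\mathbf{P}^{-t}G_n$. Because $|\det\mathbf{P}|=1$, we have $[\mathbb{Z}^d:H_n]=[\mathbb{Z}^d:G_n]$. We then build bijections $\Phi_n:\mathbb{Z}^d/G_n\to\mathbb{Z}^d/H_n$ compatible with the projections, with each $\Phi_n$ mapping a fundamental domain of $G_n$ onto one of $H_n$. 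The requirement is that $\Phi_n$ extend, on $G_1$-translates, via the linear map $\mathbf{P}^{-t}:G_1\to H_1$. The resulting limit map $\Phi$ is a homeomorphism that sends orbits to orbits, and its cocycle is locally constant, hence continuous.

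I expect Step 3 to be the main obstacle. The $\Phi_n$ must be chosen coherently across levels while keeping the cocycle bounded. I would first try to fix $\Phi_1$ on a fundamental domain $F$ of $G_1$ and then define $\Phi_n$ on $F+(\text{coset reps of }G_n\text{ in }G_1)$ by applying $\mathbf{P}^{-t}$ to the $G_1$-part, so that the cocycle is controlled by $F$ alone.
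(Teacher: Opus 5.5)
The paper does not prove this lemma; it quotes it from Proposition 3.4 of \cite{On_Z^d-odometers_associated_to_integer_matrices} and Theorem 1.5 of \cite{Z^d_odometers_and_cohomology}. So your argument has to stand on its own, and its architecture does.

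Step 1 is correct. The identification of the induced map with $A^t$ is also correct. The clean justification is that a homomorphism $\mathcal{H}(X_2,T_2)\to\mathcal{H}(X_1,T_1)$ is determined by its values on $\mathbb{Z}^d$, because the source modulo $\mathbb{Z}^d$ is torsion and the target is torsion-free. Under this map $\mathbf{e}_i$ goes to the class of the $i$-th coordinate of $\alpha$, which is the $i$-th row of $A$; hence $\mathbf{P}=(A^t)^{-1}$.

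Step 3 also works and is easier than you fear. First discard finitely many $n$ so that $H_1=\mathbf{P}^{-t}G_1\subseteq\mathbb{Z}^d$; this is possible because $\mathbb{Z}^d$ is finitely generated and lies in the increasing union $\bigcup_n\mathbf{P}G_n^*$. Then set $\Psi(\mathbf{f}+\mathbf{g})=\phi(\mathbf{f})+\mathbf{P}^{-t}\mathbf{g}$, where $\mathbf{f}$ ranges over a transversal $F$ of $G_1$, $\mathbf{g}\in G_1$, and $\phi$ is any bijection onto a transversal of $H_1$. This is a bijection of $\mathbb{Z}^d$ with $\Psi(\mathbf{x}+G_n)=\Psi(\mathbf{x})+H_n$ for all $n$ simultaneously. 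Moreover $\Psi(\mathbf{x}+\mathbf{m})-\Psi(\mathbf{x})$ depends only on $\mathbf{m}$ and $\mathbf{x}+G_1$, so there is no level-by-level coherence problem.

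The genuine gap is the determinant step in Step 2, which as written does not give $|\det A|=1$. If the image of a $G_n$-cylinder $U$ is a union of $k$ cosets of $A(G_n)$, each of measure $1/[\mathbb{Z}^d:AG_n]$, comparing measures only yields $|\det A|=k$. Even the measure of a single such coset is unjustified. Write $(G^{(2)}_m)$ for the subgroups defining $X_2$; the closure of $AG_n$ in $X_2$ has index $\lim_m[\mathbb{Z}^d:AG_n+G^{(2)}_m]$, which in general can be strictly smaller than $[\mathbb{Z}^d:AG_n]$.

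Two ingredients are missing.
\begin{enumerate}
\item There is no coboundary to remove. If $\alpha(\mathbf{e}_j,\cdot)$ is constant on $G_n$-cylinders, then commutativity of $\mathbb{Z}^d$ gives $\alpha(\mathbf{g},x)=A\mathbf{g}$ exactly, for all $\mathbf{g}\in G_n$ and all $x\in X_1$. Hence $\Phi(U)=\overline{T_2^{AG_n}\Phi(x)}$ is a single coset.
\item $AG_n$ is closed in the $(G^{(2)}_m)$-topology, which you get from $\beta$. If $T_2^{\mathbf{k}}\Phi(x)\in\Phi(U)$, then $\beta(\mathbf{k},\Phi(x))\in G_n$, so $\mathbf{k}=\alpha(\beta(\mathbf{k},\Phi(x)),x)\in AG_n$. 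Therefore $AG_n\supseteq G^{(2)}_m$ for some $m$, and $\mu_2(\Phi(U))=1/(|\det A|\,[\mathbb{Z}^d:G_n])$.
\end{enumerate}
Together with $\Phi_*\mu_1=\mu_2$, this forces $|\det A|=1$.

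There is also a measure-free route. The map $\alpha(\cdot,x)$ is a bijection of $\mathbb{Z}^d$ lying within bounded distance of $\mathbf{m}\mapsto A\mathbf{m}$. Counting lattice points in large boxes, in both directions, then gives $|\det A|=1$.
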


\subsection{Topological speedups}
Let $(G, \ast)$ and $(H, \star)$ be two groups and $(X,T)$ be a $G$-system.
A map $\kappa:X \times H \to G$ is called a \emph{speedup cocycle} for $(X,T)$ if $\kappa(x, e_H)=e_G$ and $$ \kappa(x,h_2 \star h_1)=\kappa(T^{\kappa(x,h_1)} x,h_2) \ast \kappa(x,h_1),$$
for all $h_1, h_2 \in H$ and $x \in X$.

\begin{definition} \label{Definition_speedup}
    An $H$-system $(X,S)$ is called a \emph{(topological) speedup} (to $H$) for a $G$-system $(X,T)$ if there is some speedup cocycle $\kappa: X \times H \to G$ such that $S^h(x)=T^{\kappa(x,h)}(x), \ \forall x \in X,h \in H$.
\end{definition} 

\begin{remark}
    Some people also call a dynamical system conjugate to $(X,S)$ the speedup of $(X,T)$, while we will follow our definition strictly.
\end{remark}

Our main focus is on (free) minimal bounded speedups.

\begin{lemma} [Lemma 5.2 in \cite{Topological_speedups_of_Z^d_actions}] \label{Lemma_equivalent_conditions_for_bounded_speedups}
    Let $(X,T)$ be a free Cantor $\mathbb{Z}^{d_1}$-system and $S$ be $\mathbb{Z}^{d_2}$-speedup of $T$ w.r.t the speedup cocycle $\kappa: X \times \mathbb{Z}^{d_2} \to \mathbb{Z}^{d_1}$. 
    Then the following conditions are all equivalent to each other:
    \begin{enumerate}
        \item The map $\kappa(\cdot, \mathbf{v}): X \to \mathbb{Z}^{d_1}$ is bounded for all $\mathbf{v} \in \mathbb{Z}^{d_2}$;
        \item The map $\kappa(\cdot, \mathbf{e}_j): X \to \mathbb{Z}^{d_1}$ is bounded for all $1 \leqslant j \leqslant d_2$;
        \item The map $\kappa(\cdot, \mathbf{v}): X \to \mathbb{Z}^{d_1}$ is continuous for all $\mathbf{v} \in \mathbb{Z}^{d_2}$;
        \item The map $\kappa(\cdot, \mathbf{e}_j): X \to \mathbb{Z}^{d_1}$ is continuous for all $1 \leqslant j \leqslant d_2$;
        \item The map $\kappa : X \times \mathbb{Z}^{d_2} \to \mathbb{Z}^{d_1}$ is continuous.
    \end{enumerate}
where $\mathbb{Z}^{d_1}$ and $\mathbb{Z}^{d_2}$ are equipped
with discrete topology and $\mathbf{e}_j=(0, \cdots ,0,1,0, \cdots, 0)^t$ is the $j$-th standard basis vector of $\mathbb{Z}^{d_2}$. 
\end{lemma}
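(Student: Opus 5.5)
The plan is to prove the cycle of implications
$$\text{(i)}\Rightarrow\text{(iii)}\Rightarrow\text{(iv)}\Rightarrow\text{(ii)}\Rightarrow\text{(i)},$$
and to handle (v) separately via (iii)$\Leftrightarrow$(v). Throughout I will write the cocycle identity additively. Since $S^{h}x=T^{\kappa(x,h)}x$, it reads
$$\kappa(x,\mathbf{v}+\mathbf{u})=\kappa(S^{\mathbf{u}}x,\mathbf{v})+\kappa(x,\mathbf{u}),\qquad \kappa(x,\mathbf{0})=\mathbf{0}.$$
I will also use that each $S^{\mathbf{v}}$ is a homeomorphism, because $S$ is by assumption a $\mathbb{Z}^{d_2}$-system.

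\textbf{(iii)$\Leftrightarrow$(v).} Since $\mathbb{Z}^{d_2}$ is discrete, $X\times\mathbb{Z}^{d_2}$ is a disjoint union of the clopen slices $X\times\{\mathbf{v}\}$. Hence $\kappa$ is continuous exactly when every slice map $\kappa(\cdot,\mathbf{v})$ is continuous.

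\textbf{(i)$\Rightarrow$(iii).} Fix $\mathbf{v}$. By (i), $\kappa(\cdot,\mathbf{v})$ takes values in a finite set $F\subset\mathbb{Z}^{d_1}$. For $\mathbf{g}\in F$, set
$$A_{\mathbf{g}}=\{x: \kappa(x,\mathbf{v})=\mathbf{g}\}.$$
I claim $A_{\mathbf{g}}=\{x: S^{\mathbf{v}}x=T^{\mathbf{g}}x\}$. The inclusion $\subseteq$ is the definition of $S$. For $\supseteq$, if $S^{\mathbf{v}}x=T^{\mathbf{g}}x$, then $T^{\mathbf{g}}x=T^{\kappa(x,\mathbf{v})}x$, and freeness of $T$ forces $\kappa(x,\mathbf{v})=\mathbf{g}$. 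This is the one place where freeness is essential, and I expect it to be the only delicate step.

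Each $A_{\mathbf{g}}$ is closed, being an equalizer of the two continuous maps $S^{\mathbf{v}}$ and $T^{\mathbf{g}}$. The finitely many sets $A_{\mathbf{g}}$ are pairwise disjoint and cover $X$. Each is therefore also open, as the complement of a finite union of closed sets. So $\kappa(\cdot,\mathbf{v})$ is locally constant, hence continuous.

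\textbf{(iii)$\Rightarrow$(iv).} This is the special case $\mathbf{v}=\mathbf{e}_j$.

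\textbf{(iv)$\Rightarrow$(ii).} A continuous map from the compact space $X$ into the discrete space $\mathbb{Z}^{d_1}$ has compact, hence finite, image. So each $\kappa(\cdot,\mathbf{e}_j)$ is bounded.

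\textbf{(ii)$\Rightarrow$(i).} Let $M=\max_j\sup_x\|\kappa(x,\mathbf{e}_j)\|$. First I handle negative generators. Taking $\mathbf{u}=-\mathbf{e}_j$ and $\mathbf{v}=\mathbf{e}_j$ in the cocycle identity gives
$$\mathbf{0}=\kappa(S^{-\mathbf{e}_j}x,\mathbf{e}_j)+\kappa(x,-\mathbf{e}_j),$$
so $\|\kappa(x,-\mathbf{e}_j)\|\leqslant M$ as well. Next, write an arbitrary $\mathbf{v}$ as a word $\mathbf{v}=\mathbf{s}_1+\cdots+\mathbf{s}_k$ in the generators $\pm\mathbf{e}_j$, with $k=\sum_j|v_j|$. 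Iterating the cocycle identity expresses $\kappa(x,\mathbf{v})$ as a sum of $k$ terms of the form $\kappa(y_i,\mathbf{s}_i)$ for suitable points $y_i\in X$. Consequently $\|\kappa(x,\mathbf{v})\|\leqslant kM$ uniformly in $x$, which is (i).

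This closes the cycle, and together with (iii)$\Leftrightarrow$(v) it shows that all five conditions are equivalent.
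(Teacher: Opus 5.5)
Your proof is correct and complete. The paper does not prove this lemma itself but imports it from Johnson--McClendon, and your argument is the standard one: freeness of $T$ makes the level sets of $\kappa(\cdot,\mathbf{v})$ equal to the closed equalizers $\{x: S^{\mathbf{v}}x=T^{\mathbf{g}}x\}$, so finitely many of them give local constancy. Compactness gives continuity $\Rightarrow$ boundedness, and the cocycle identity passes boundedness from $\pm\mathbf{e}_j$ to every $\mathbf{v}$. You are also right that this uses continuity of each $S^{\mathbf{v}}$, which holds because $S$ is a $\mathbb{Z}^{d_2}$-system.
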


When the conditions in the above lemma are fulfilled, we say $\kappa: X \times \mathbb{Z}^{d_2} \to \mathbb{Z}^{d_1}$ is \emph{bounded} and $S$ is a \emph{bounded speedup} of $T$.
If $S$ is minimal (and free) as a dynamical system and is a bounded speedup of $T$, we say it is a \emph{(free) minimal bounded speedup} of $T$.

\begin{definition} \label{Definition_of_constant_speedup}
   Let $(X,T)$ be a free Cantor $\mathbb{Z}^{d_1}$-system.
   A speedup cocycle $\kappa: X \times \mathbb{Z}^{d_2} \to \mathbb{Z}^{d_1}$ is said to be \emph{constant} if $\kappa(\cdot, \mathbf{v}): X \to \mathbb{Z}^{d_1}$ is constant for each fixed $\mathbf{v} \in \mathbb{Z}^{d_2}$.
   We say $S$ is a \emph{constant speedup} of $T$ if the corresponding speedup cocycle is constant.
\end{definition}

By definition, we see that a constant speedup is always bounded.
Moreover, it is easy to see that a speedup cocycle $\kappa: X \times \mathbb{Z}^{d_2} \to \mathbb{Z}^{d_1}$ is constant if and only if there is 
$\mathbf{C} \in \mathrm{Mat}_{d_1 \times d_2}(\mathbb{Z})$ such that $$\kappa(x,\mathbf{v})=\mathbf{C v}, \quad \forall x \in X, \mathbf{v} \in \mathbb{Z}^{d_2}.$$
The matrix $\mathbf{C}$ is said to be the \emph{speed} of $\kappa$ or the speedup.
We sometimes denote by $\kappa_{\mathbf{C}}$ the constant speedup cocycle w.r.t. $\mathbf{C}$.

In the rest of this subsection, we correct a minor mistake in Theorem 5.3 in \cite{Topological_speedups_of_Z^d_actions}.

\begin{definition}
    Let $G$ and $H$ be two groups and $(X,T)$ be a $G$-system.
    A speedup cocycle $\kappa: X \times H \to G$ is said to be \emph{non-vanishing} if $\kappa(x,h) \neq e_G$ for all $h \in H \setminus \{ e_H\}$.
\end{definition}

\begin{proposition} \label{Proposition_free_and_nonvanishing}
    Let $G$ and $H$ be two groups and $(X,T)$ be a free $G$-system.
    Suppose $(X,S)$ is a speedup of $(X,T)$ w.r.t. speedup cocycle $\kappa: X \times H \to G$.
    Then $(X,S)$ is free if and only if $\kappa$ is non-vanishing.
\end{proposition}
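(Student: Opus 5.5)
Both directions follow by unwinding the definitions: $S^h = T^{\kappa(x,h)}$ pointwise, and we use the freeness of $T$ to translate fixed points of $S$ into vanishing of $\kappa$. Here non-vanishing means that for every $x \in X$ and every $h \neq e_H$ we have $\kappa(x,h) \neq e_G$.

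\emph{Non-vanishing implies free.} Suppose $\kappa$ is non-vanishing, and let $x \in X$ and $h \in H$ satisfy $S^h x = x$. Then $T^{\kappa(x,h)} x = x$. Since $T$ is free, this forces $\kappa(x,h) = e_G$. By the non-vanishing assumption, this can only happen when $h = e_H$. Hence $S$ is free.

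\emph{Free implies non-vanishing.} Conversely, suppose $S$ is free, and suppose $\kappa(x,h) = e_G$ for some $x \in X$ and $h \in H$. Then
$$S^h x = T^{e_G} x = x.$$
Here I use that $T^{e_G}$ is the identity, which holds because $T$ is a group action. Freeness of $S$ then gives $h = e_H$. Therefore $\kappa(x,h) \neq e_G$ whenever $h \neq e_H$, i.e.\ $\kappa$ is non-vanishing.

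There is no real obstacle. The only point needing care is that freeness of $T$ is used only in the first direction, and the second direction needs nothing beyond $T^{e_G} = \mathrm{id}$.
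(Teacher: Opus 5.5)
Your proof is correct and follows the paper's argument: freeness of $T$ turns $S^h x = x$ into $\kappa(x,h)=e_G$ in one direction, and $T^{e_G}=\mathrm{id}$ handles the other. The paper phrases the second direction as a contrapositive, exhibiting a fixed point of $S^h$ with $h\neq e_H$; your remark that freeness of $T$ is needed only for the first direction is accurate.
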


\begin{proof}
    Suppose $\kappa$ is non-vanishing.
    Let $h \in H \setminus \{e_H\}$.
    Then $\kappa(x,h) \neq e_G$ for all $x \in X$ by non-vanishment.
    Since $(X,T)$ is free, we see $$S^{h}(x)=T^{\kappa(x,h)}(x) \neq T^{e_G}(x)=x, \quad \forall x \in X.$$
    It follows that $(X,S)$ is free.

    Now suppose $\kappa(x,h)=e_G$ for some $x \in X$ and $h \neq e_H$ in $H$.
    We see $$S^h(x)=T^{\kappa(x,h)(x)}=T^{e_G}(x)=x,$$
    which means $(X,S)$ is not free.
\end{proof}

Theorem 5.3 in \cite{Topological_speedups_of_Z^d_actions} asserts that a minimal bounded speedup to $\mathbb{Z}^d$ of a free $\mathbb{Z}^d$-odometer is again a free $\mathbb{Z}^d$-odometer.
We claim that the non-vanishment has to be assumed; otherwise we have the following counterexample.

\begin{example} \label{Example_counterexample_to_Theorem_5.3_of_JM} 
    Let $\alpha$ be an irrational element in $\mathbb{Z}_p$ and put $\mathbf{z}=(1,\alpha)^t \in \mathbb{Z}_p^2$.
    Then it is easy to see $(\mathbb{Z}_p, T_{\mathbf{z}})$ is a free $\mathbb{Z}^2$-odometer.
    Put $$\mathbf{C}= \begin{pmatrix}
        1 &0\\
        0 &0\\
    \end{pmatrix} $$
    and let $S$ be the constant speedup of $(\mathbb{Z}_p, T_{\mathbf{z}})$ w.r.t. $\mathbf{C}$.
    Then we see $$S^{(n_1,n_2)^t} (x)= T_{\mathbf{z}}^{\mathbf{C}(n_1,n_2)^t}(x)= T_{\mathbf{z}}^{(n_1,0)^t}=x+n_1.$$
    So $(\mathbb{Z}_p, S)$ is minimal and thus it is a $\mathbb{Z}^2$-odometer.
    It is not free since $S^{(0,1)^t}=S^{(0,0)^t}.$
\end{example}

The gap in the original proof of Theorem 5.3 appears in its Claim 2 which is invalid without non-vanishment assumptions.
Adding this assumption, one can easily address the gap by following the lines of its original proof.
Below we present a corrected version of the result with a different proof.

\begin{theorem} [Corrected version of Theorem 5.3 in \cite{Topological_speedups_of_Z^d_actions}] \label{Theorem_Corrected_version_of_a_result_by_JM}
    Let $d_1, d_2 $ be two positive integers and $(X, T)$ be a free $\mathbb{Z}^{d_1}$-odometer. 
    Suppose $S: \mathbb{Z}^{d_2} \curvearrowright X$ is a minimal speedup of $(X,T)$ with respect to a bounded speedup cocycle $\kappa : X \times \mathbb{Z}^{d_2} \to \mathbb{Z}^{d_1}$.
    Then $(X, S)$ is a $\mathbb{Z}^{d_2}$-odometer.
    Moreover, $(X, S)$ is free if and only if $\kappa$ is non-vanishing.
\end{theorem}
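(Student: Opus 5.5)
We need to prove the corrected Theorem: a minimal bounded speedup of a free $\mathbb{Z}^{d_1}$-odometer is a $\mathbb{Z}^{d_2}$-odometer, and it is free iff $\kappa$ is non-vanishing. The freeness part is immediate from Proposition \ref{Proposition_free_and_nonvanishing}, since $(X,T)$ is free. So the work is to show that $(X,S)$ is a $\mathbb{Z}^{d_2}$-odometer. By hypothesis $X$ is a Cantor space and $S$ is minimal, so by Definition \ref{Definition_odometers} it remains only to prove that $\{S^{\mathbf{v}}:\mathbf{v}\in\mathbb{Z}^{d_2}\}$ is equicontinuous.

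The plan is to show that every $S^{\mathbf{v}}$ lies in the closure of $\{T^{\mathbf{u}}:\mathbf{u}\in\mathbb{Z}^{d_1}\}$ in $\mathrm{Homeo}(X)$ (uniform topology), or more concretely, to preserve a compatible invariant metric. Since $(X,T)$ is an equicontinuous $\mathbb{Z}^{d_1}$-system, we may replace $\mathrm{d}_X$ by the metric $\rho(x,y)=\sup_{\mathbf{u}}\mathrm{d}_X(T^{\mathbf{u}}x,T^{\mathbf{u}}y)$. This metric is compatible with the topology, by equicontinuity and compactness, and is $T$-invariant. It suffices to show that each $S^{\mathbf{v}}$ is a $\rho$-isometry; an isometric family is trivially equicontinuous.

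To see this, fix $\mathbf{v}$. By Lemma \ref{Lemma_equivalent_conditions_for_bounded_speedups}, $\kappa(\cdot,\mathbf{v})$ is continuous with finite range, hence locally constant, and $X$ is partitioned into finitely many clopen sets $U_1,\dots,U_k$ on which $\kappa(\cdot,\mathbf{v})$ equals $\mathbf{u}_1,\dots,\mathbf{u}_k$. Let $\delta>0$ be a Lebesgue number of this partition for $\rho$. If $\rho(x,y)<\delta$, then $x$ and $y$ lie in the same $U_i$, so $\rho(S^{\mathbf{v}}x,S^{\mathbf{v}}y)=\rho(T^{\mathbf{u}_i}x,T^{\mathbf{u}_i}y)=\rho(x,y)$.

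This gives local isometry at scale $\delta_{\mathbf{v}}$, but $\delta_{\mathbf{v}}$ depends on $\mathbf{v}$, and removing that dependence is the main obstacle. I would handle it using the fact that $\rho$ on an odometer can be chosen ultrametric, with balls given by the clopen $T$-invariant partitions from the algebraic model of Theorem \ref{Theorem_algebraic_model_for_odometers}. Concretely, take $\rho$ so that $\rho(x,y)\le 2^{-n}$ iff $x$ and $y$ have the same image in $\mathbb{Z}^{d_1}/G_n$. Each such partition $\mathcal{P}_n$ is permuted by every $T^{\mathbf{u}}$.

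The claim to prove is that $S^{\mathbf{v}}$ maps each atom of $\mathcal{P}_n$ into an atom of $\mathcal{P}_n$, for all $\mathbf{v}$ and $n$. I would prove this by induction along words in the generators, using the cocycle identity $S^{\mathbf{v}+\mathbf{e}_j}=S^{\mathbf{e}_j}\circ S^{\mathbf{v}}$. This reduces the claim to the generators $S^{\mathbf{e}_j}$ and their inverses. For a generator, $\kappa(\cdot,\mathbf{e}_j)$ is constant on the atoms of some $\mathcal{P}_{N}$. Hence for $n\ge N$, $S^{\mathbf{e}_j}$ acts on each atom $A\in\mathcal{P}_n$ as a single translation $T^{\mathbf{u}}$, which sends $A$ onto an atom.

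For $n<N$, atoms of $\mathcal{P}_n$ are unions of atoms of $\mathcal{P}_N$, and here the argument is more delicate. I would argue that $S^{\mathbf{e}_j}$ is a bijection commuting with the finite quotient maps up to the translation data. Minimality of $S$, together with the fact that the $S$-action induces a well-defined action on $\mathcal{P}_n$ for large $n$, should then show that the induced permutation of $\mathcal{P}_n$ is compatible with the coarser partitions. If this fails, the fallback is to pass to a subsequence of $(G_n)$ so that the relevant $\kappa$-values stabilize.

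Once atoms are permuted for all large $n$, the family $\{S^{\mathbf{v}}\}$ is uniformly equicontinuous. Since $X$ is Cantor and $S$ is minimal, Definition \ref{Definition_odometers} gives that $(X,S)$ is a $\mathbb{Z}^{d_2}$-odometer, and freeness follows as noted from Proposition \ref{Proposition_free_and_nonvanishing}.
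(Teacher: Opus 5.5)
Your working argument is the paper's proof. You pass to the $T$-invariant compatible metric $\rho=\mathrm{d}_X^T$, use local constancy of $\kappa(\cdot,\pm\mathbf{e}_j)$ to see that each generator acts as a single translation $T^{\mathbf{u}}$ at small scales, and propagate along words in the generators. Freeness via Proposition~\ref{Proposition_free_and_nonvanishing} is also handled as in the paper.

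The ``main obstacle'' you flag does not need the algebraic model. Apply your Lebesgue-number step only to the $2d_2$ maps $S^{\pm\mathbf{e}_j}$, with one common $\delta_0$. Then $\rho(x,y)<\delta_0$ implies $\rho(S^{\pm\mathbf{e}_j}x,S^{\pm\mathbf{e}_j}y)=\rho(x,y)<\delta_0$, so induction on word length gives $\rho(S^{\mathbf{n}}x,S^{\mathbf{n}}y)=\rho(x,y)$ for all $\mathbf{n}$ directly. Your statement that every $S^{\mathbf{v}}$ permutes the atoms of $\mathcal{P}_n$ for all $n\geqslant N$ (with $N$ uniform over the $2d_2$ generators) is the same argument in ultrametric form, and it suffices.

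You should drop two stronger goals stated along the way: that each $S^{\mathbf{v}}$ is a global $\rho$-isometry, and that $S^{\mathbf{v}}$ maps atoms of $\mathcal{P}_n$ into atoms also for $n<N$. Both are false in general, so the minimality argument you sketch for $n<N$ cannot succeed. For example, take $X=\mathbb{Z}_2$ and $T(x)=x+1$. Set $\kappa(x,1)=1,2,2,-1$ according as $x\equiv 0,1,2,3\pmod 4$, extended to a cocycle in the obvious way.
\begin{itemize}
\item Then $S$ permutes the classes mod $4$ cyclically, $0\to1\to3\to2\to0$.
\item $S^4$ acts on $4\mathbb{Z}_2$ as $x\mapsto x+4$, so $S$ is a minimal bounded speedup.
\item Yet $S(2\mathbb{Z}_2)=4\mathbb{Z}_2\cup(1+4\mathbb{Z}_2)$ meets both classes mod $2$.
\item Also $|S(0)-S(2)|_2=|1-4|_2=1\neq\tfrac12=|0-2|_2$, so $S$ is not a global isometry.
\end{itemize}
Equicontinuity only concerns small scales, so restricting to $n\geqslant N$ (your ``fallback'') is exactly right, and it is all your final paragraph uses.
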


\begin{proof}
    To see $(X, S)$ is a $\mathbb{Z}^{d_2}$-odometer, we only need to show equicontinuity for $S$ (since minimality is assumed).
    Let $\mathrm{d}_X$ be the metric on $X$.
    Define $$\mathrm{d}_X^T (x,y):=\sup_{\mathbf{n} \in \mathbb{Z}^{d_1}} \mathrm{d}(T^{\mathbf{n}}x, T^{\mathbf{n}}y).$$
    Then, by equicontinuity of $T$, we know $\mathrm{d}_X^T$ is a compatible metric with $\mathrm{d}_X$, meaning that they generate the same topology.
    Moreover, $\mathrm{d}_X^T$ is $T$-invariant; i.e., $d_X^T(T^{\mathbf{n}}x, T^{\mathbf{n}}y)=\mathrm{d}_X^T(x,y)$ for all $x,y \in X$ and $\mathbf{n} \in \mathbb{Z}^{d_1}.$

    Since $\kappa$ is bounded, by Lemma \ref{Lemma_equivalent_conditions_for_bounded_speedups} we see each $\kappa(\cdot, \epsilon\mathbf{e}_j): X \to \mathbb{Z}^{d_1}$ is continuous (w.r.t. $\mathrm{d}_X$ and thus w.r.t. $\mathrm{d}_X^T$) for all $1 \leqslant j \leqslant d_2$ and $\epsilon \in \{\pm 1\}$. 
    Since $X$ is a Cantor set, there is $\delta_0>0$ such that $$\mathrm{d}_X^T(x,y) < \delta_0 \implies \kappa(x, \epsilon \mathbf{e}_j)=\kappa(y,\epsilon \mathbf{e}_j), \quad \forall 1 \leqslant j \leqslant d_2, \ \epsilon \in \{\pm 1\}.$$
    Consequently, for all $x, y \in X$ with $\mathrm{d}_X^T(x,y)< \delta_0$ we have 
    $$\mathrm{d}_X^T(S^{\epsilon \mathbf{e}_j}x, S^{\epsilon \mathbf{e}_j}y)=\mathrm{d}_X^T \left(T^{\kappa(x, \epsilon \mathbf{e}_j)}x, T^{\kappa(y, \epsilon \mathbf{e}_j)}y \right) = \mathrm{d}_X^T(x,y), \quad \forall 1 \leqslant j \leqslant d_2, \ \epsilon \in \{\pm 1\}.$$
    Therefore, for all $x, y \in X$ with $\mathrm{d}_X^T(x,y)< \delta_0$, we see 
    $$\mathrm{d}_X^T(S^{\mathbf{n}}x, S^{\mathbf{n}}y)= \mathrm{d}_X^T(x,y), \quad \forall \mathbf{n} \in \mathbb{Z}^{d_2}.$$
    It then clearly means that $S$ is equicontinuous w.r.t. $\mathrm{d}_X^T$ and hence w.r.t. $\mathrm{d}_X$.

    The freeness assertion is just Proposition \ref{Proposition_free_and_nonvanishing}.
\end{proof}

\begin{remark}
    Our proof strategy is a well-known fact that an equicontinuous action is basically an isometric action, and an observation that a bounded speedup somehow preserves isometry locally.
    One may find that our proof is simpler than its original one in \cite{Topological_speedups_of_Z^d_actions} because ours is on account of the abstract Definition \ref{Definition_odometers} possessing more flexibility and generality.
    The original proof is based on the algebraic models (and K-R partitions) and says more things.
    For example, following the original proof, one can derive an algorithm to compute the (algebraic models of) speedups.
    Nevertheless, these two proofs are essentially equivalent in view of Theorem \ref{Theorem_algebraic_model_for_odometers}.
\end{remark}

\section{Continuous orbit equivalence for $(\mathbb{Z}_p, T_{\mathbf{z}})$}

\subsection{Construction of algebraic models}

Let $p$ be a prime and $\mathbb{Z}_p$ be the ring of $p$-adic integers.
For $z \in \mathbb{Z}_p$ and $n \geqslant 1$, we let $z^{(n)}$ be the $n$-th truncation of $z$, that is $z^{(n)} \in \mathbb{Z}/p^n \mathbb{Z}=\{0, 1, \cdots, p^n-1\}$ and $z^{(n)} \equiv z \pmod{p^n}$.
Similarly, for $\mathbf{z}=(z_1, z_2, \cdots z_d)^t \in \mathbb{Z}_p^d$, we let $\mathbf{z}^{(n)}=(z^{(n)}_1, z^{(n)}_2, \cdots z^{(n)}_d)^t$.

Now given $d \in \mathbb{Z}_{>0}$ and $\mathbf{z}=(z_1, z_2, \cdots z_d)^t \in \mathbb{Z}_p^d \setminus \{\mathbf{0}\}$, define a $\mathbb{Z}_p$-linear map 
$\vartheta_{\mathbf{z}}: \mathbb{Z}_p^d \to \mathbb{Z}_p$ by $$\vartheta_{\mathbf{z}}(\mathbf{w})= \mathbf{w} \cdot \mathbf{z}= \sum_{i=1}^d w_iz_i,$$
where $\mathbf{w}=(w_1,w_2, \cdots, w_d)^t \in \mathbb{Z}_p^d$.
For each $n \geqslant 0$, we can put $\vartheta_{\mathbf{z}}^{(n)}: \mathbb{Z}^{d} \to \mathbb{Z}/p^{n}\mathbb{Z}$ (note that $\mathbb{Z}^d \subset \mathbb{Z}_p^d$) such that $$\vartheta_{\mathbf{z}}^{(n)}(\mathbf{w}) \equiv \vartheta_{\mathbf{z}}(\mathbf{w}) \pmod{p^n}.$$

Note that $\vartheta_{\mathbf{z}}^{(n)}$ is $\mathbb{Z}$-linear.
Let $$G_{\mathbf{z},n}:= \ker{\vartheta_{\mathbf{z}}^{(n)}}=\{\mathbf{w} \in \mathbb{Z}^d: \vartheta_{\mathbf{z}}^{(n)}(\mathbf{w})=0 \}.$$
It is easy to see that $\mathfrak{G}_{\mathbf{z}} = (G_{\mathbf{z},n})_{n \geqslant 0}$ is a decreasing sequence of subgroups of $\mathbb{Z}^d$ with $$[\mathbb{Z}^d:G_{\mathbf{z},n}]= \# \vartheta_{\mathbf{z}}^{(n)}(\mathbb{Z}^d) \leqslant p^n< \infty,$$
and $$\lim_{n \to \infty}[\mathbb{Z}^d:G_{\mathbf{z},n}]= \lim_{n \to \infty} \# \vartheta_{\mathbf{z}}^{(n)}(\mathbb{Z}^d)= \# \vartheta_{\mathbf{z}}(\mathbb{Z}^d)=\infty,$$
since $\mathbf{z} \neq \mathbf{0}$.
Thus we obtain a $\mathbb{Z}^d$-odometer $(X_{\mathbf{z}}, \sigma_{\mathbf{z}}):=\left(X_{\mathfrak{G}_{\mathbf{z}}}, \sigma_{\mathfrak{G}_{\mathbf{z}}} \right)$.

Note that $\vartheta_{\mathbf{z}}$ is surjective if and only if $\vartheta_{\mathbf{z}}^{(n)}$ is surjective for all $n \geqslant 1$, which happens precisely when $v_p(\mathbf{z})=0$.

\begin{proposition} \label{Proposition_construction_of_algebraic_model}
    $(X_{\mathbf{z}}, \sigma_{\mathbf{z}})$ is an algebraic model for $(\mathbb{Z}_p, T_{\mathbf{z}})$ when $v_p(\mathbf{z})=0$.
\end{proposition}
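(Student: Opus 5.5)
We will build an explicit conjugacy $\Phi: X_{\mathbf{z}} \to \mathbb{Z}_p$ intertwining $\sigma_{\mathbf{z}}$ with $T_{\mathbf{z}}$. The point of departure is that, because $v_p(\mathbf{z})=0$, each $\vartheta_{\mathbf{z}}^{(n)}$ is surjective. Hence it induces a group isomorphism
$$\bar\vartheta_n: \mathbb{Z}^d/G_{\mathbf{z},n} \xrightarrow{\ \sim\ } \mathbb{Z}/p^n\mathbb{Z}, \qquad \mathbf{w}+G_{\mathbf{z},n} \mapsto \vartheta_{\mathbf{z}}^{(n)}(\mathbf{w}).$$
This map is well defined and injective because $G_{\mathbf{z},n}$ is exactly the kernel. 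Under $\bar\vartheta_n$, the translation $\sigma_n^{\mathbf{v}}$ becomes the map $a \mapsto a + \vartheta_{\mathbf{z}}^{(n)}(\mathbf{v}) = a + \mathbf{v}\cdot\mathbf{z} \pmod{p^n}$ on $\mathbb{Z}/p^n\mathbb{Z}$. So each finite level of the algebraic model is conjugate to the reduction mod $p^n$ of $T_{\mathbf{z}}$.

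Next we check that these level isomorphisms are compatible with the bonding maps. The bonding map $\mathbb{Z}^d/G_{\mathbf{z},n+1}\to\mathbb{Z}^d/G_{\mathbf{z},n}$ is the natural projection (since $G_{\mathbf{z},n+1}\subset G_{\mathbf{z},n}$), and it corresponds under $\bar\vartheta_{n+1},\bar\vartheta_n$ to reduction $\mathbb{Z}/p^{n+1}\mathbb{Z}\to\mathbb{Z}/p^n\mathbb{Z}$. This holds because $\vartheta^{(n)}_{\mathbf{z}}(\mathbf{w})$ is the reduction of $\vartheta^{(n+1)}_{\mathbf{z}}(\mathbf{w})$. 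Passing to inverse limits therefore gives a homeomorphism
$$\Phi: X_{\mathbf{z}}=\varprojlim \mathbb{Z}^d/G_{\mathbf{z},n} \longrightarrow \varprojlim \mathbb{Z}/p^n\mathbb{Z}=\mathbb{Z}_p.$$
It is a bijection because every level map is, and it is continuous, with continuous inverse, in the product/inverse-limit topologies. Here we use the identification $\mathbb{Z}_p=\varprojlim\mathbb{Z}/p^n\mathbb{Z}$ recalled in Subsection 2.1, under which the $p$-adic topology is the inverse-limit topology. The level-$0$ term is trivial and can be ignored.

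Finally, equivariance $\Phi\circ\sigma_{\mathbf{z}}^{\mathbf{v}}=T_{\mathbf{z}}^{\mathbf{v}}\circ\Phi$ follows coordinatewise. For $x\in\mathbb{Z}_p$, the residue of $x+\mathbf{v}\cdot\mathbf{z}$ mod $p^n$ equals $x^{(n)}+\vartheta^{(n)}_{\mathbf{z}}(\mathbf{v})$ mod $p^n$, which matches the level-$n$ computation above. By Theorem \ref{Theorem_algebraic_model_for_odometers} (the index conditions having already been verified before the statement), this shows $(\mathbb{Z}_p,T_{\mathbf{z}})$ is conjugate to $(X_{\mathbf{z}},\sigma_{\mathbf{z}})$, i.e. the latter is an algebraic model.

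The only genuinely substantive point is the surjectivity of $\vartheta_{\mathbf{z}}^{(n)}$ when $v_p(\mathbf{z})=0$: some $z_i$ is a unit, so $z_i^{(n)}$ is invertible mod $p^n$ and multiples of $\mathbf{e}_i$ already hit every residue. Without this, the level maps would only be injective onto a proper subgroup, and the inverse limit would be conjugate to the action on $p^{v}\mathbb{Z}_p$ rather than on $\mathbb{Z}_p$. The rest is bookkeeping of compatibility.
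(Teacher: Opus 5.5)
Your proof is correct and is essentially the paper's argument. The paper defines the same map $\Phi\bigl((\mathbf{v}_n+G_{\mathbf{z},n})_{n\geqslant 1}\bigr)=\lim_{n\to\infty}\vartheta_{\mathbf{z}}^{(n)}(\mathbf{v}_n)$ and checks well-definedness, bijectivity, continuity and equivariance one at a time, using the surjectivity available when $v_p(\mathbf{z})=0$. Your level-wise isomorphisms $\bar\vartheta_n$, compatible with the bonding maps, deliver all of these at once; in particular the homeomorphism step becomes automatic, whereas the paper relies on a cylinder-set claim whose proof it omits.
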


\begin{proof}
    For $(\mathbf{v}_n+G_{\mathbf{z},n})_{n \geqslant 1} \in X_{\mathbf{z}}$ with $\mathbf{v}_n \in \mathbb{Z}^d$, define 
    $$\Phi\left((\mathbf{v}_n+G_{\mathbf{z},n})_{n \geqslant 1}\right)= \lim^p_{n \to \infty } \vartheta_{\mathbf{z}}^{(n)}(\mathbf{v}_n),$$
    where the limit is taken in $\mathbb{Z}_p$.
    We then check $\Phi: X_{\mathbf{z}} \to \mathbb{Z}_p$ is a (well-defined) topological conjugacy between $(X_{\mathbf{z}}, \sigma_{\mathbf{z}})$ and $(\mathbb{Z}_p, T_{\mathbf{z}})$ step by step.

    \emph{Well-definedness.}
    If $\mathbf{v}_n+G_{\mathbf{z},n} =\mathbf{w}_n+G_{\mathbf{z},n}$, then we see $\mathbf{v}_n - \mathbf{w}_n \in G_{\mathbf{z},n}$ and thus $$\vartheta_{\mathbf{z}}^{(n)}(\mathbf{v}_n-\mathbf{w}_n) \equiv 0 \pmod{p^n}.$$
    Since $\vartheta_{\mathbf{z}}^{(n)}$ is $\mathbb{Z}$-linear, we see $$\vartheta_{\mathbf{z}}^{(n)}(\mathbf{v}_n) \equiv \vartheta_{\mathbf{z}}^{(n)}(\mathbf{w}_n) \pmod{p^n}.$$
    It then follows that the limit (if existing) does not depend on the choice of representatives.

    \emph{Existence of the limit.} 
    Since $(\mathbf{v}_n+G_{\mathbf{z},n})_{n \geqslant 1} \in X_{\mathbf{z}}$, we see $\mathbf{v}_{n+1}+G_{\mathbf{z},n+1} \subset \mathbf{v}_n+G_{\mathbf{z},n}$ for each $n \geqslant 1$.
    That is $\mathbf{v}_{n+1}+G_{\mathbf{z},n} = \mathbf{v}_n +G_{\mathbf{z},n}$, so we have 
    \begin{equation} \label{Equation_compatible_relation}
        \vartheta_{\mathbf{z}}^{(n)}(\mathbf{v}_n) \equiv \vartheta_{\mathbf{z}}^{(n)}(\mathbf{v}_{n+1}) \equiv \vartheta_{\mathbf{z}}^{(n+1)}(\mathbf{v}_{n+1}) \pmod{p^n}, \quad \forall n \geqslant 1.
    \end{equation}
    Consequently, the limit exists.

    \emph{Surjectivity.} 
    Since $v_p(\mathbf{z})=0$, we know $\vartheta_{\mathbf{z}}: \mathbb{Z}_p^d \to \mathbb{Z}_p$ is surjective.
    Let $y \in \mathbb{Z}_p$. 
    There is $\mathbf{x} \in \mathbb{Z}_p^d$ such that $\vartheta_{\mathbf{z}}(\mathbf{x})=y$.
    Note that $\mathbf{x}^{(n+1)}-\mathbf{x}^{(n)} \equiv 0 \pmod{p^n}$, which gives
    $$\vartheta_{\mathbf{z}}\left(\mathbf{x}^{(n+1)}-\mathbf{x}^{(n)} \right) \equiv 0 \pmod{p^n}.$$
    It follows that $\mathbf{x}^{(n+1)}-\mathbf{x}^{(n)} \in G_{\mathbf{z},n}$ for all $n \geqslant 1$ and thus $(\mathbf{x}^{(n)}+G_{\mathbf{z},n})_{n \geqslant 1} \in X_{\mathbf{z}}$.
    Moreover, $$\vartheta_{\mathbf{z}}^{(n)}( \mathbf{x}^{(n)}) \equiv \vartheta_{\mathbf{z}}(\mathbf{x}^{(n)}) \equiv \vartheta_{\mathbf{z}}(\mathbf{x}) =y \pmod{p^n}, \quad \forall n \geqslant 1.$$
    Hence,    
    $$\Phi \left((\mathbf{x}^{(n)}+G_{\mathbf{z},n})_{n \geqslant 1}\right)= \lim^p_{n \to \infty} \vartheta_{\mathbf{z}}^{(n)}( \mathbf{x}^{(n)})= y.$$
    Namely, $\Phi$ is surjective.

    \emph{Injectivity.} 
    Let $(\mathbf{v}_n+G_{\mathbf{z},n})_{n \geqslant 1}, (\mathbf{w}_n+G_{\mathbf{z},n})_{n \geqslant 1} \in X_{\mathbf{z}}$ such that $$\Phi((\mathbf{v}_n+G_{\mathbf{z},n})_{n \geqslant 1})=\Phi((\mathbf{w}_n+G_{\mathbf{z},n})_{n \geqslant 1}).$$
    By definition, we see $$\lim^p_{n \to \infty } \vartheta_{\mathbf{z}}^{(n)}(\mathbf{v}_n)= \lim^p_{n \to \infty } \vartheta_{\mathbf{z}}^{(n)}(\mathbf{w}_n).$$
    Since $\vartheta_{\mathbf{z}}^{(n)}(\mathbf{v}_n), \vartheta_{\mathbf{z}}^{(n)}(\mathbf{w}_n) \in \mathbb{Z}/ p^n \mathbb{Z}$, by equation (\ref{Equation_compatible_relation})
    we have $$\vartheta_{\mathbf{z}}^{(n)}(\mathbf{v}_n) \equiv \vartheta_{\mathbf{z}}^{(n)}(\mathbf{w}_n) \pmod{p^n}, \quad \forall n \geqslant 1.$$
    Thus, by $\mathbb{Z}$-linearity we see $\vartheta_{\mathbf{z}}^{(n)}(\mathbf{v}_n-\mathbf{w}_n) \equiv 0 \pmod{p^n}$; i.e., $\mathbf{v}_n-\mathbf{w}_n \in G_{\mathbf{z},n}$, for all $n \geqslant 1$.
    It then follows that $(\mathbf{v}_n+G_{\mathbf{z},n})_{n \geqslant 1}=(\mathbf{w}_n+G_{\mathbf{z},n})_{n \geqslant 1}$ in $X_{\mathbf{z}}$ and $\Phi$ is injective.

    \emph{Homeomorphism.} For $N \geqslant 1$ and $\mathbf{v} \in \mathbb{Z}^d$, define the cylinder set $$\mathcal{I}_{\mathbf{v},N}:=\left\{ (\mathbf{w}_n+G_{\mathbf{z},n})_{n \geqslant 1} \in X_{\mathbf{z}}: \mathbf{w}_N+G_{\mathbf{z},N}= \mathbf{v}+G_{\mathbf{z},N}\right\}.$$
    Note that $\mathcal{I}_{\mathbf{v},N}$ is indeed a cylinder set since the first $N-1$ coordinates of the sequences in it are also restricted by projection relation.
    We claim that $\Phi(\mathcal{I}_{\mathbf{v},N})= \vartheta_{\mathbf{z}}(\mathbf{v})+p^N \mathbb{Z}_p$ for all $N \geqslant 1, \mathbf{v} \in \mathbb{Z}^d$.
    The proof of this claim is similar to the proofs of surjectivity and injectivity, hence omitted.
    The claim immediately implies $\Phi$ is a homeomorphism since it is bijective and maps cylinder sets to cylinder sets.

    \emph{Conjugacy.}
    For $\mathbf{k} \in \mathbb{Z}^d$ and $ (\mathbf{v}_n+ G_{\mathbf{z},n})_{n \geqslant 1} \in X_{\mathbf{z}}$, we see  
     \begin{align*}
        \Phi \circ \sigma_{\mathbf{z}}^{\mathbf{k}} ((\mathbf{v}_n+ G_{\mathbf{z},n})_{n \geqslant 1}) & = \Phi ((\mathbf{v}_n+\mathbf{k}+G_{\mathbf{z},n})_{n \geqslant 1})\\
        & = \lim^p_{n \to \infty} \vartheta_{\mathbf{z}}^{(n)}(\mathbf{v}_n+\mathbf{k}) \\
        & = \lim^p_{n \to \infty} \vartheta_{\mathbf{z}}^{(n)}(\mathbf{v}_n)+ \lim^p_{n \to \infty} \vartheta_{\mathbf{z}}^{(n)}(\mathbf{k}) \\
        & = \Phi( (\mathbf{v}_n+ G_{\mathbf{z},n})_{n \geqslant 1} ) + \mathbf{z \cdot k} \\
        & = T_{\mathbf{z}}^{\mathbf{k}} \circ \Phi \left( (\mathbf{v}_n+ G_{\mathbf{z},n})_{n \geqslant 1} \right)
    \end{align*}
    
    Hence, $(\mathbf{X}_{\mathbf{z}}, \sigma_{\mathbf{z}})$ is topologically conjugate to $(\mathbb{Z}_p, T_{\mathbf{z}})$ via $\Phi$ and the assertion follows.
\end{proof}

\begin{remark}
    More generally, one can show that, for $\mathbf{z}=(z_1, z_2, \cdots z_d)^t \in \mathbb{Z}_p^d \setminus \{\mathbf{0}\}$, the following three $\mathbb{Z}^d$-systems are all topologically conjugate to each other:
    $$(X_{\mathbf{z}}, \sigma_{\mathbf{z}})\cong (p^{v_p(\mathbf{z})}\mathbb{Z}_p, T_{\mathbf{z}})\cong  (\mathbb{Z}_p, T_{\hat{\mathbf{z}}}),$$
    where $\hat{\mathbf{z}}:= p^{-v_p(\mathbf{z})} \mathbf{z} \in \mathbb{Z}_p^d \setminus p \mathbb{Z}_p^d$.
\end{remark}

\begin{corollary}
    Let $\mathbf{z}=(z_1, z_2, \cdots z_d)^t \in \mathbb{Z}_p^d$ such that $v_p(\mathbf{z})=0$.
    Then the $\mathbb{Z}^d$-odometer $(X_{\mathbf{z}}, \sigma_{\mathbf{z}})$ is free if and only if $\mathbf{z}$ is $\mathbb{Z}$-independent.
\end{corollary}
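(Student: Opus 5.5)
The plan is to reduce the statement to the freeness criterion for algebraic models recorded just after Theorem \ref{Theorem_algebraic_model_for_odometers}: $(X_{\mathfrak{G}},\sigma_{\mathfrak{G}})$ is free if and only if $\bigcap_{n} G_n=\{\mathbf{0}\}$. So I would compute $\bigcap_{n\geqslant 1} G_{\mathbf{z},n}$ directly. Alternatively, Proposition \ref{Proposition_construction_of_algebraic_model} identifies $(X_{\mathbf{z}},\sigma_{\mathbf{z}})$ with $(\mathbb{Z}_p,T_{\mathbf{z}})$ up to conjugacy, and freeness is preserved by conjugacy. Hence it is equivalent to check freeness of $T_{\mathbf{z}}$ itself, and I will give the argument in a form that covers both viewpoints.

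First I show that $\bigcap_n G_{\mathbf{z},n}=\{\mathbf{w}\in\mathbb{Z}^d:\mathbf{w}\cdot\mathbf{z}=0\}$. A vector $\mathbf{w}\in\mathbb{Z}^d$ lies in every $G_{\mathbf{z},n}$ exactly when $\vartheta_{\mathbf{z}}(\mathbf{w})\equiv 0 \pmod{p^n}$ for all $n$. This means $|\mathbf{w}\cdot\mathbf{z}|_p\leqslant p^{-n}$ for all $n$, that is, $\mathbf{w}\cdot\mathbf{z}=0$ in $\mathbb{Z}_p$, because $\bigcap_n p^n\mathbb{Z}_p=\{0\}$.

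Second, I observe that this kernel is trivial precisely when $\mathbf{z}$ is $\mathbb{Z}$-independent, which is just the definition: $\sum_i\lambda_i z_i=0$ with $\lambda_i\in\mathbb{Z}$ forces all $\lambda_i=0$. On the dynamical side this matches directly: $T_{\mathbf{z}}^{\mathbf{n}}x=x$ if and only if $\mathbf{n}\cdot\mathbf{z}=0$, independently of $x$. So $T_{\mathbf{z}}$ is free if and only if no nonzero $\mathbf{n}\in\mathbb{Z}^d$ satisfies $\mathbf{n}\cdot\mathbf{z}=0$. Combining these two steps gives both directions.

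I do not expect a real obstacle here. The only point needing care is the hypothesis $v_p(\mathbf{z})=0$. It is used solely to invoke Proposition \ref{Proposition_construction_of_algebraic_model}, so that $(X_{\mathbf{z}},\sigma_{\mathbf{z}})$ is the model of $T_{\mathbf{z}}$ and the corollary can be read as a statement about $T_{\mathbf{z}}$. The kernel computation itself does not need this hypothesis.
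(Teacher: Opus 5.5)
Your proof is correct. It takes a slightly different route from the one the paper implicitly intends.

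The paper states this corollary without proof, directly after Proposition~\ref{Proposition_construction_of_algebraic_model}. Its intended argument is your ``alternative'' route: the conjugacy $\Phi$ transports freeness, and the introduction already notes that $T_{\mathbf{z}}^{\mathbf{n}}x=x$ if and only if $\mathbf{n}\cdot\mathbf{z}=0$. On that route the hypothesis $v_p(\mathbf{z})=0$ does real work, since the conjugacy depends on surjectivity of $\vartheta_{\mathbf{z}}$.

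Your primary route stays inside the algebraic model. You compute $\bigcap_{n} G_{\mathbf{z},n}=\{\mathbf{w}\in\mathbb{Z}^d:\mathbf{w}\cdot\mathbf{z}=0\}$ using $\bigcap_n p^n\mathbb{Z}_p=\{0\}$, and then apply the criterion that $(X_{\mathfrak{G}},\sigma_{\mathfrak{G}})$ is free if and only if $\bigcap_n G_n=\{\mathbf{0}\}$. This bypasses the conjugacy entirely. As you observe, it proves the equivalence for every nonzero $\mathbf{z}$, so $v_p(\mathbf{z})=0$ matters only for reading the result as a statement about $T_{\mathbf{z}}$.

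Both arguments are one-liners. Yours is more self-contained, while the paper's reuses the elementary freeness criterion for $T_{\mathbf{z}}$ it already had.
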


\subsection{Proof of Theorem \ref{Theorem_necessary_condition_for_continuous_orbit_equivalence}}
We now use the algebraic model $(X_{\mathbf{z}}, \sigma_{\mathbf{z}})$ to compute the first cohomology group of $(\mathbb{Z}_p, T_{\mathbf{z}})$.

\begin{proposition} \label{Proposition_determining_first_cohomology_group}
    Let $\mathbf{z}=(z_1,z_2 \cdots, z_d)^t \in \mathbb{Z}_p^d$ be $\mathbb{Z}$-independent with $v_p(\mathbf{z})=0$.
    Then we have $$\mathcal{H}(\mathbb{Z}_p, T_{\mathbf{z}})= \bigcup_{n \geqslant 1}\left( \mathbb{Z}^d + \frac{1}{p^n}\mathbb{Z} \mathbf{z}^{(n)} \right)=\mathbb{Z}^d+\left\{\frac{k}{p^n}\left(z_1^{(n)}, z_2^{(n)}, \cdots , z_d^{(n)}\right)^t: k \in \mathbb{Z}, n \in \mathbb{Z}_{>0} \right\}.$$
\end{proposition}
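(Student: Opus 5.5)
By Proposition \ref{Proposition_construction_of_algebraic_model}, $(X_{\mathbf{z}}, \sigma_{\mathbf{z}})$ is an algebraic model for $(\mathbb{Z}_p, T_{\mathbf{z}})$. The preceding corollary shows it is free. Hence, by the definition of $\mathcal{H}$ (well-defined by the cited classification), $\mathcal{H}(\mathbb{Z}_p,T_{\mathbf{z}})=\bigcup_{n\geqslant 1} G_{\mathbf{z},n}^*$. It therefore suffices to prove, for each fixed $n\geqslant 1$,
$$G_{\mathbf{z},n}^*=\mathbb{Z}^d+\frac{1}{p^n}\mathbb{Z}\,\mathbf{z}^{(n)}.$$
The second displayed description in the statement is then just a rewriting of this union.

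For the inclusion $\supseteq$, note first that $\mathbb{Z}^d\subset G_{\mathbf{z},n}^*$ because $G_{\mathbf{z},n}\subset\mathbb{Z}^d$. Next take $\mathbf{w}\in G_{\mathbf{z},n}$. Then $\mathbf{w}\cdot\mathbf{z}^{(n)}\equiv \mathbf{w}\cdot\mathbf{z}\equiv 0 \pmod{p^n}$. This is a congruence of integers, since $\mathbf{w}\in\mathbb{Z}^d$ and $\mathbf{z}^{(n)}\in\mathbb{Z}^d$. Hence $\frac{k}{p^n}\mathbf{z}^{(n)}\cdot\mathbf{w}\in\mathbb{Z}$ for every $k\in\mathbb{Z}$, as required.

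For the inclusion $\subseteq$, I will use a duality/index count rather than a direct element chase. Put $L:=\mathbb{Z}^d+\frac{1}{p^n}\mathbb{Z}\mathbf{z}^{(n)}$; it is a lattice containing $\mathbb{Z}^d$, and $L\subseteq G_{\mathbf{z},n}^*$ by the previous paragraph. Since both lattices are sandwiched between $\mathbb{Z}^d$ and the same dual, equality follows once the indices agree, namely $[L:\mathbb{Z}^d]=[G_{\mathbf{z},n}^*:\mathbb{Z}^d]$.

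\begin{itemize}
\item \emph{Computing $[G_{\mathbf{z},n}^*:\mathbb{Z}^d]$.} For lattices one has $[G^*:\mathbb{Z}^d]=[\mathbb{Z}^d:G]$, because covolumes invert under duality. Moreover $[\mathbb{Z}^d:G_{\mathbf{z},n}]=\#\vartheta^{(n)}_{\mathbf{z}}(\mathbb{Z}^d)=p^n$, since $\vartheta^{(n)}_{\mathbf{z}}$ is surjective when $v_p(\mathbf{z})=0$.
\item \emph{Computing $[L:\mathbb{Z}^d]$.} The quotient $L/\mathbb{Z}^d$ is cyclic, generated by the image of $p^{-n}\mathbf{z}^{(n)}$. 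Some coordinate $z_i$ is a unit, so $z_i^{(n)}$ is prime to $p$. Consequently $\frac{k}{p^n}\mathbf{z}^{(n)}\in\mathbb{Z}^d$ exactly when $p^n\mid k$, and the order of this generator is $p^n$.
\end{itemize}

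The two indices are both $p^n$, which gives equality. The only step requiring care is the duality identity $[G^*:\mathbb{Z}^d]=[\mathbb{Z}^d:G]$ together with $(\mathbb{Z}^d)^*=\mathbb{Z}^d$. This is standard: write $G=\mathbf{A}\mathbb{Z}^d$, so that $G^*=(\mathbf{A}^t)^{-1}\mathbb{Z}^d$ and $|\det \mathbf{A}|=[\mathbb{Z}^d:G]$. Everything else is a routine check, and freeness of $\mathbf{z}$ is used only to make $\mathcal{H}$ defined.
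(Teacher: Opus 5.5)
Your proof is correct and follows the paper's argument. You prove $\supseteq$ via the integer congruence $\mathbf{w}\cdot\mathbf{z}^{(n)}\equiv 0 \pmod{p^n}$, and get the reverse inclusion by comparing indices over $\mathbb{Z}^d$. The differences are cosmetic: you justify $[G_{\mathbf{z},n}^*:\mathbb{Z}^d]=[\mathbb{Z}^d:G_{\mathbf{z},n}]$ with determinants rather than by identifying $G_{\mathbf{z},n}^*/\mathbb{Z}^d$ with the character group of $\mathbb{Z}^d/G_{\mathbf{z},n}$, and you explicitly check that $(\mathbb{Z}^d+p^{-n}\mathbb{Z}\mathbf{z}^{(n)})/\mathbb{Z}^d$ has order $p^n$, a step the paper leaves implicit.
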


\begin{proof}
    According to Proposition \ref{Proposition_construction_of_algebraic_model}, we have $$\mathcal{H}(\mathbb{Z}_p, T_{\mathbf{z}})=\mathcal{H}(X_{\mathbf{z}}, \sigma_{\mathbf{z}})= \bigcup_{n \geqslant 1} G_{\mathbf{z},n}^*.$$
    It then suffices to show that $G_{\mathbf{z},n}^*=\mathbb{Z}^d + p^{-n}\mathbb{Z} \mathbf{z}^{(n)}$ for all $n \geqslant 1$.
    Let $\mathbf{w} \in G_{\mathbf{z},n}=\ker{\vartheta_{\mathbf{z}}^{(n)}}$. 
    Clearly, we have 
    $$\mathbf{z}^{(n)} \cdot \mathbf{w} \equiv \mathbf{z} \cdot \mathbf{w} = \vartheta_{\mathbf{z}}(\mathbf{w}) \equiv 0 \pmod{p^n}. $$
    Hence, 
    $$\left(\mathbf{a}+\frac{k}{p^n}\mathbf{z}^{(n)} \right) \cdot \mathbf{w}\equiv \mathbf{a} \cdot \mathbf{w}+\frac{k}{p^n} \left(\mathbf{z}^{(n)} \cdot \mathbf{w}\right) \in \mathbb{Z},$$
    for all $\mathbf{a} \in \mathbb{Z}^d, k \in\mathbb{Z}, \mathbf{w} \in G_{\mathbf{z},n}$.
    By definition, $ \mathbb{Z}^d + p^{-n}\mathbb{Z} \mathbf{z}^{(n)} \subset G_{\mathbf{z},n}^*$.
    To show $G_{\mathbf{z},n}^*  \subset \mathbb{Z}^d + p^{-n}\mathbb{Z} \mathbf{z}$, it suffices to show that $[G_{\mathbf{z},n}^*: \mathbb{Z}^d]= p^n$.
    Note that $G_{\mathbf{z},n}^* / \mathbb{Z}^d$ is isomorphic to the character group of $\mathbb{Z}^d / G_{\mathbf{z},n}$.
    By surjectivity of $\vartheta_{\mathbf{z}}^{(n)}$, we know $$[G_{\mathbf{z},n}^*: \mathbb{Z}^d]=|\mathbb{Z}^d / G_{\mathbf{z},n}|= \# \vartheta_{\mathbf{z}}^{(n)}(\mathbb{Z}^d)= p^n.$$
    The proof is now finished.
\end{proof}

In view of Proposition \ref{Proposition_determining_first_cohomology_group}, it seems more concise to work in the local field $\mathbb{Q}_p$.
Formally, we denote by $\mathcal{H}_p (\mathbb{Z}_p, T_{\mathbf{z}})$ the (topological) closure of $\mathcal{H} (\mathbb{Z}_p,T_{\mathbf{z}})$ in $\mathbb{Q}_p^d$.

\begin{lemma} \label{Lemma_determining_p_adic_closure_of_first_cohomology_group}
    Let $\mathbf{z}=(z_1,z_2 \cdots, z_d)^t \in \mathbb{Z}_p^d$ such that $v_p(\mathbf{z})=0$.
    Then we have $$\mathcal{H}_p(\mathbb{Z}_p, T_{\mathbf{z}})= \mathbb{Z}_p^d+ \mathbb{Q}_p \mathbf{z}=\mathbb{Z}_p^d+\{\lambda\mathbf{z}: \lambda \in \mathbb{Q}_p\}.$$
    Consequently, $\mathcal{H}_p (\mathbb{Z}_p, T_{\mathbf{z}})$ contains a unique non-trivial $\mathbb{Q}_p$-linear subspace $\mathbb{Q}_p \mathbf{z}$.
\end{lemma}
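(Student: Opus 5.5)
Write $\mathcal{H}:=\mathcal{H}(\mathbb{Z}_p,T_{\mathbf{z}})$ and $\mathcal{K}:=\mathbb{Z}_p^d+\mathbb{Q}_p\mathbf{z}$. The plan is to prove the two inclusions $\overline{\mathcal{H}}\subset\mathcal{K}$ and $\mathcal{K}\subset\overline{\mathcal{H}}$, with closures taken in $\mathbb{Q}_p^d$. The description of $\mathcal{H}$ in Proposition \ref{Proposition_determining_first_cohomology_group} is derived only from the algebraic model and a surjectivity count. The $\mathbb{Z}$-independence hypothesis is not needed for that computation (only to make $\mathcal{H}$ well defined via freeness). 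So I will use $\mathcal{H}=\bigcup_{n}\bigl(\mathbb{Z}^d+p^{-n}\mathbb{Z}\mathbf{z}^{(n)}\bigr)$ under the hypothesis $v_p(\mathbf{z})=0$ alone.

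\emph{Step 1: $\mathcal{K}$ is closed.} For $m\geqslant 0$ set $\mathcal{K}_m:=\mathbb{Z}_p^d+p^{-m}\mathbb{Z}_p\mathbf{z}$. Each $\mathcal{K}_m$ is the image of the compact set $\mathbb{Z}_p^d\times\mathbb{Z}_p$ under a continuous map, hence compact, and it is also open because it contains the open subgroup $\mathbb{Z}_p^d$. Then $\mathcal{K}=\bigcup_m\mathcal{K}_m$ is a union of open subgroups, hence an open subgroup, and any open subgroup is closed since its complement is a union of cosets.

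\emph{Step 2: $\mathcal{H}\subset\mathcal{K}$.} Write $\mathbf{z}^{(n)}=\mathbf{z}+p^n\mathbf{u}$ with $\mathbf{u}\in\mathbb{Z}_p^d$. Then $p^{-n}k\mathbf{z}^{(n)}=p^{-n}k\mathbf{z}+k\mathbf{u}\in\mathbb{Q}_p\mathbf{z}+\mathbb{Z}_p^d$, and $\mathbb{Z}^d\subset\mathbb{Z}_p^d$. Combined with Step 1, this gives $\overline{\mathcal{H}}\subset\mathcal{K}$.

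\emph{Step 3: $\mathcal{K}\subset\overline{\mathcal{H}}$.} The closure of $\mathbb{Z}^d$ is $\mathbb{Z}_p^d$, so $\mathbb{Z}_p^d\subset\overline{\mathcal{H}}$. Since $\overline{\mathcal{H}}$ is a closed subgroup, it remains to show $\lambda\mathbf{z}\in\overline{\mathcal{H}}$ for every $\lambda\in\mathbb{Q}_p$. Write $\lambda=p^{-m}\mu$ with $\mu\in\mathbb{Z}_p$, and for $n\geqslant m$ put $k_n:=p^{n-m}\mu^{(n)}\in\mathbb{Z}$. Then
\[
p^{-n}k_n\mathbf{z}^{(n)}-\lambda\mathbf{z}=p^{-m}\bigl(\mu^{(n)}\mathbf{z}^{(n)}-\mu\mathbf{z}\bigr),
\]
and $\mu^{(n)}\mathbf{z}^{(n)}\equiv\mu\mathbf{z}\pmod{p^n}$. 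So the difference lies in $p^{n-m}\mathbb{Z}_p^d$ and tends to $\mathbf{0}$, which gives $\lambda\mathbf{z}\in\overline{\mathcal{H}}$. This is the only step that needs a little care, namely choosing the approximating integers $k_n$ with exactly this scaling.

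\emph{Step 4: the consequence.} Let $V\subset\mathcal{K}$ be a nonzero $\mathbb{Q}_p$-subspace and $\mathbf{v}\in V$. Write $\mathbf{v}=\mathbf{a}+\lambda\mathbf{z}$ with $\mathbf{a}\in\mathbb{Z}_p^d$. Since $v_p(\mathbf{z})=0$, some coordinate $z_i$ is a unit. Compose the quotient map $\mathbb{Q}_p^d\to\mathbb{Q}_p^d/\mathbb{Q}_p\mathbf{z}$ with a linear isomorphism $\mathbb{Q}_p^d/\mathbb{Q}_p\mathbf{z}\cong\mathbb{Q}_p^{d-1}$, for instance $\mathbf{x}\mapsto(x_j-x_iz_j/z_i)_{j\neq i}$. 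This composite sends $\mathcal{K}$ into the bounded set $\mathbb{Z}_p^{d-1}$. The image of $V$ is a subspace contained in a bounded set, so it is zero. Hence $V\subset\mathbb{Q}_p\mathbf{z}$, and since $V\neq 0$ we get $V=\mathbb{Q}_p\mathbf{z}$, which proves uniqueness.
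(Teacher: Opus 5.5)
Your proof is correct. It follows the paper's overall plan: compute the closure from the explicit formula for $\mathcal{H}$, then use that $\mathbb{Z}_p^d+\mathbb{Q}_p\mathbf{z}$ is bounded modulo the line $\mathbb{Q}_p\mathbf{z}$. The execution differs in two places.

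\emph{The closure.} The paper only says that $\mathcal{H}_p(\mathbb{Z}_p,T_{\mathbf{z}})=\mathbb{Z}_p^d+\mathbb{Q}_p\mathbf{z}$ ``follows immediately'' from Proposition~\ref{Proposition_determining_first_cohomology_group}. Your Steps 1--3 supply the two facts this needs. First, $\mathbb{Z}_p^d+\mathbb{Q}_p\mathbf{z}$ is closed, being a subgroup containing the open subgroup $\mathbb{Z}_p^d$; compactness of the $\mathcal{K}_m$ is not needed here. Second, each $\lambda\mathbf{z}$ is a limit of the elements $p^{-n}k_n\mathbf{z}^{(n)}$ of $\mathcal{H}$, with the scaling $k_n=p^{n-m}\mu^{(n)}$.

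\emph{Uniqueness.} The paper argues by contradiction using $D=\mathrm{dist}(\mathbf{w},\mathbb{Q}_p\mathbf{z})>0$, with a ``standard argument'' that this infimum is attained. It then rescales by $\bar\lambda$ with $|\bar\lambda|_p>d/D$, which pushes $\bar\lambda\mathbf{w}$ out of the $d$-neighbourhood of $\mathbb{Q}_p\mathbf{z}$ containing $\mathbb{Z}_p^d+\mathbb{Q}_p\mathbf{z}$. Your linear map with kernel $\mathbb{Q}_p\mathbf{z}$ onto $\mathbb{Q}_p^{d-1}$ expresses the same idea. It replaces the metric facts with the elementary observation that a bounded subspace is zero, and it treats an arbitrary subspace $V$ directly rather than one line $\mathbb{Q}_p\mathbf{w}$.

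You are also right that $\mathbb{Z}$-independence plays no role in the formula for $\mathcal{H}$. That observation resolves the mismatch between this lemma's hypotheses and those of Proposition~\ref{Proposition_determining_first_cohomology_group}.
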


\begin{proof}
    The computation of $\mathcal{H}_p(\mathbb{Z}_p, T_{\mathbf{z}})$ follows immediately from Proposition \ref{Proposition_determining_first_cohomology_group} and thus $\mathbb{Q}_p \mathbf{z}$ is obviously a non-trivial (since $\mathbf{z} \neq \mathbf{0}$) $\mathbb{Q}_p$-linear subspace.
    Now we prove the uniqueness part by contradiction.
    Suppose there is $\mathbf{w} \notin \mathbb{Q}_p \mathbf{z}$ such that $\mathbb{Q}_p \mathbf{w} \subset \mathcal{H}_p (\mathbb{Z}_p, T_{\mathbf{z}})=\mathbb{Z}_p^d+\mathbb{Q}_p \mathbf{z}$.
    Put $$D:=\mathrm{dist}(\mathbf{w}, \mathbb{Q}_p \mathbf{z})=\inf_{ \lambda \in \mathbb{Q}_p } \mathrm{dist}(\mathbf{w}, \lambda \mathbf{z}),$$
    where $\mathrm{dist}$ is the distance on $\mathbb{Q}_p^d$ (see subsection 2.1).
    A standard argument gives that there is $\hat{\lambda} \in \mathbb{Q}_p$ such that $D= \mathrm{dist}(\mathbf{w}, \hat{\lambda} \mathbf{z})>0$.
    Take $\bar{\lambda} \in \mathbb{Q}_p$ with $|\bar{\lambda}|_p> d/D$.
    Then we see $$\mathrm{dist}(\bar{\lambda} \mathbf{w}, \mathbb{Q}_p \mathbf{z})=\mathrm{dist}(\bar{\lambda} \mathbf{w}, \bar{\lambda} \hat{\lambda} \mathbf{z})=|\bar{\lambda}|_p \cdot D>d.$$
    This contradicts that $\bar{\lambda} \mathbf{w} \in \mathbb{Q}_p \mathbf{w} \subset \mathbb{Z}_p^d+\mathbb{Q}_p \mathbf{z}$ since every vector in $\mathbb{Z}_p^d+\mathbb{Q}_p \mathbf{z}$ is $d$-close to $\mathbb{Q}_p \mathbf{z}$.
\end{proof}

\begin{proof}[Proof of Theorem \ref{Theorem_necessary_condition_for_continuous_orbit_equivalence}]
Suppose that $(\mathbb{Z}_p, T_{\mathbf{z}})$ and $(\mathbb{Z}_p, T_{\mathbf{w}})$ are continuously orbit equivalent.
By Lemma \ref{Lemma_continuous_orbit_equivalence_for_Z^d_odometers}, there is $\mathbf{P} \in \mathrm{GL}_{d}(\mathbb{Q})$ with $\det(\mathbf{P})=\pm1$ such that $$\mathcal{H}(\mathbb{Z}_p, T_{\mathbf{w}})=\mathbf{P} \mathcal{H} (\mathbb{Z}_p, T_{\mathbf{z}}).$$
Taking $p$-adic closure yields $$\mathcal{H}_p(\mathbb{Z}_p, T_{\mathbf{w}})=\mathbf{P} \mathcal{H}_p(\mathbb{Z}_p, T_{\mathbf{z}}).$$
Since $\mathbf{P}$ is invertible, by Lemma \ref{Lemma_determining_p_adic_closure_of_first_cohomology_group} (note that now we have $v_p(\mathbf{z})=v_p(\mathbf{w})=0$), we have
$$\mathbb{Q}_p \mathbf{w}= \mathbf{P} (\mathbb{Q}_p \mathbf{z})=\mathbb{Q}_p \mathbf{Pz}$$
Hence we see $\mathbf{w}=\lambda \mathbf{P} \mathbf{z}$ for some $\lambda \in \mathbb{Q}_p^{\times}$.
\end{proof}

\section{Free minimal constant speedups for $(\mathbb{Z}_p,T_{\mathbf{z}})$}

\subsection{Constant speedups for algebraic models}

This subsection is independent of the next subsection. 
So there is no loss of completeness for the proof of the main result if the reader skips this subsection.

\begin{proposition} \label{Proposition_minimal_and_free_condition_for_speed_matrix_C}
    Let $\mathfrak{G}=(G_n)_{n\geqslant1}$ be a decreasing sequence of finite-index subgroups of $\mathbb{Z}^d$ such that $\lim_{n \to \infty}[\mathbb{Z}^d:G_n] =\infty$ and $\cap_{n \geqslant 1}G_n = \{ \mathbf{0} \}$.
    Let $S: \mathbb{Z}^d \curvearrowright X_{\mathfrak{G}}$ be a constant speedup of $(X_{\mathfrak{G}}, \sigma_{\mathfrak{G}})$ with speed $\mathbf{C} \in \mathrm{Mat}_{d \times d}(\mathbb{Z})$.
    Then we have:

    (1) $(X_{\mathfrak{G}},S)$ is minimal if and only if $\mathbf{C}\mathbb{Z}^d + G_n = \mathbb{Z}^d$ for all $n \geqslant 1$.

    (2) $(X_{\mathfrak{G}},S)$ is free if and only if $\det(\mathbf{C}) \neq 0$.

    (3) Suppose $(X_{\mathfrak{G}},S)$ is minimal and free.
    Put $H_n=\mathbb{Z}^d \cap \mathbf{C}^{-1}G_n$ for each $n \geqslant 1$.
    Then $\mathfrak{H}:=(H_n)_{n \geqslant 1}$ is a decreasing sequence of finite-index subgroups of $\mathbb{Z}^d$ such that $\lim_{n \to \infty}[\mathbb{Z}^d:H_n] =\infty$ and $\cap_{n \geqslant 1}H_n = \{ \mathbf{0} \}$.
    Moreover, $(X_{\mathfrak{G}},S)$ is topologically conjugate to $(X_{\mathfrak{H}}, \sigma_{\mathfrak{H}})$.
\end{proposition}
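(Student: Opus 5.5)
We work throughout with the explicit form of the speedup. Since $S^{\mathbf{v}}=\sigma_{\mathfrak{G}}^{\mathbf{C}\mathbf{v}}$, we have
$$S^{\mathbf{v}}\left((\mathbf{w}_n+G_n)_{n\geqslant 1}\right)=(\mathbf{w}_n+\mathbf{C}\mathbf{v}+G_n)_{n\geqslant 1}.$$
So $S$ is the restriction of the translation action of $\mathbb{Z}^d$ on the profinite group $X_{\mathfrak{G}}=\varprojlim \mathbb{Z}^d/G_n$ to the subgroup $\mathbf{C}\mathbb{Z}^d$.

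\emph{Minimality (1).} The orbit of a point $x$ is $x+\iota(\mathbf{C}\mathbb{Z}^d)$, where $\iota:\mathbb{Z}^d\to X_{\mathfrak{G}}$ is the diagonal embedding. Since every orbit is a translate of the orbit of $0$, minimality is equivalent to density of $\iota(\mathbf{C}\mathbb{Z}^d)$. The cylinder sets $\mathcal{I}_{\mathbf{u},N}$ form a base, so density means that $\mathbf{C}\mathbb{Z}^d$ meets every coset $\mathbf{u}+G_N$. This is exactly the condition $\mathbf{C}\mathbb{Z}^d+G_N=\mathbb{Z}^d$ for every $N$.

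\emph{Freeness (2).} By Proposition \ref{Proposition_free_and_nonvanishing}, and because $\sigma_{\mathfrak{G}}$ is free (as $\cap_n G_n=\{\mathbf{0}\}$), $S$ is free if and only if $\kappa_{\mathbf{C}}$ is non-vanishing. This means $\mathbf{C}\mathbf{v}\neq\mathbf{0}$ for every $\mathbf{v}\neq\mathbf{0}$ in $\mathbb{Z}^d$, i.e.\ $\mathbf{C}$ is injective on $\mathbb{Z}^d$, which is equivalent to $\det\mathbf{C}\neq0$.

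\emph{Conjugacy (3).} Here $\mathbf{C}^{-1}G_n$ denotes the preimage. The map $\mathbf{v}\mapsto \mathbf{C}\mathbf{v}+G_n$ is a homomorphism $\mathbb{Z}^d\to\mathbb{Z}^d/G_n$ with kernel $H_n$, and it is surjective by (1). Hence it induces an isomorphism
$$\psi_n:\mathbb{Z}^d/H_n\to\mathbb{Z}^d/G_n,$$
so $[\mathbb{Z}^d:H_n]=[\mathbb{Z}^d:G_n]\to\infty$. The $H_n$ are decreasing because the $G_n$ are. Since $\mathbf{C}$ is injective, $\cap_n H_n=\mathbf{C}^{-1}(\cap_n G_n)=\{\mathbf{0}\}$. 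The $\psi_n$ are compatible with the projections, so they assemble into a homeomorphism $\Psi:X_{\mathfrak{H}}\to X_{\mathfrak{G}}$; it is bijective on each level and maps cylinders to cylinders. It satisfies
$$\Psi\circ\sigma_{\mathfrak{H}}^{\mathbf{v}}=\sigma_{\mathfrak{G}}^{\mathbf{C}\mathbf{v}}\circ\Psi=S^{\mathbf{v}}\circ\Psi,$$
because $\psi_n(\mathbf{w}+\mathbf{v}+H_n)=\mathbf{C}\mathbf{w}+\mathbf{C}\mathbf{v}+G_n$. This gives the required conjugacy.

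The main care point is (1): one must make sure that the cylinder sets really form a base and that density at every finite level suffices. This follows from the inverse-limit topology. No step seems genuinely hard.
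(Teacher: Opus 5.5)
Your proof is correct. Parts (1) and (2) follow the paper's argument. The paper reduces minimality to minimality of each finite-level action $\mathbf{w}+G_n\mapsto\mathbf{C}\mathbf{v}+\mathbf{w}+G_n$, which is exactly your criterion that $\iota(\mathbf{C}\mathbb{Z}^d)$ be dense, read through the cylinder base; your translate argument also supplies the justification the paper leaves implicit. The paper obtains (2) from Proposition~\ref{Proposition_free_and_nonvanishing}, as you do.

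For (3), however, the paper gives no argument of its own. It only says one can follow the proof of Theorem~5.3 of Johnson--McClendon, with Claim~2 repaired by the non-vanishing hypothesis. That is an argument via algebraic models and Kakutani--Rokhlin partitions, designed for arbitrary bounded speedup cocycles.

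You instead use that a constant speedup of $\sigma_{\mathfrak{G}}$ is just translation by the subgroup $\mathbf{C}\mathbb{Z}^d$ of the profinite group $X_{\mathfrak{G}}$. The homomorphisms $\mathbf{v}\mapsto\mathbf{C}\mathbf{v}+G_n$ are surjective by minimality and have kernel $H_n$. They therefore give compatible, equivariant isomorphisms $\mathbb{Z}^d/H_n\cong\mathbb{Z}^d/G_n$, whose inverse limit is the conjugacy.

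Your route is self-contained and short, and it shows exactly where each hypothesis enters. Minimality gives $[\mathbb{Z}^d:H_n]=[\mathbb{Z}^d:G_n]\to\infty$, and freeness (injectivity of $\mathbf{C}$) gives $\bigcap_n H_n=\mathbf{C}^{-1}\bigl(\bigcap_n G_n\bigr)=\{\mathbf{0}\}$. The Johnson--McClendon route buys generality: it handles non-constant bounded cocycles, where $S$ is no longer a subgroup translation, and it yields an algorithm for computing the algebraic model of the speedup. For the constant case, though, it is heavier, and in the paper the reader is left to re-verify it.
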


\begin{proof}
    Note that $(X_{\mathfrak{G}},S)$ is minimal if and only if the projective action $S_n: \mathbb{Z}^d \curvearrowright \mathbb{Z}^d/G_n $ is minimal for all $n \geqslant 1$, where $$S_n^{\mathbf{v}}(\mathbf{w}+G_n):= S^{\mathbf{v}}(\mathbf{w})+ G_n = \sigma_{\mathfrak{G}}^{\mathbf{Cv}}(\mathbf{w})+G_n= \mathbf{Cv}+\mathbf{w}+G_n.$$
    So (1) follows by noting that $S_n$ is minimal if and only if $\mathbf{C}\mathbb{Z}^d+G_n= \mathbb{Z}^d$.

    Now we prove (2). We see $\det(\mathbf{C}) = 0$ if and only if there is $\mathbf{v} \in \mathbb{Z}^d \setminus \{\mathbf{0}\}$ such that $\mathbf{Cv=0}$.
    So the assertion follows immediately by Proposition \ref{Proposition_free_and_nonvanishing}.

    To see (3), one can follow the proof of Theorem 5.3 in \cite{Topological_speedups_of_Z^d_actions}, with correction to its Claim 2.
\end{proof}

\begin{corollary} \label{Corollary_computation_of_speedup_of_p_adic_Z^d_odometer}
    Let $\mathbf{z} \in \mathbb{Z}_p^d$ be $\mathbb{Z}$-independent with $v_p(\mathbf{z})=0$.
    Let $S: \mathbb{Z}^d \curvearrowright X_{\mathbf{z}}$ be a constant speedup of $(X_{\mathbf{z}}, \sigma_{\mathbf{z}})$ with speed $\mathbf{C} \in \mathrm{Mat}_{d \times d}(\mathbb{Z})$.
    Then we have

    (1) $(X_{\mathbf{z}},S)$ is minimal if and only if $v_p\left(\mathbf{C}^t \mathbf{z}\right)=0.$

    (2) $(X_{\mathbf{z}},S)$ is free if and only if $\det(\mathbf{C}) \neq 0$.

    (3) When $(X_{\mathbf{z}},S)$ is minimal and free, it is topologically conjugate to $(X_{\mathbf{w}}, \sigma_{\mathbf{w}})$ with $\mathbf{w}:=\mathbf{C}^t \mathbf{z}$.
\end{corollary}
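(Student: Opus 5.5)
The plan is to specialize Proposition \ref{Proposition_minimal_and_free_condition_for_speed_matrix_C} to $\mathfrak{G}=\mathfrak{G}_{\mathbf{z}}$, so that $G_n=G_{\mathbf{z},n}=\ker\vartheta^{(n)}_{\mathbf{z}}$. The whole computation rests on the identity
$$\vartheta_{\mathbf{z}}(\mathbf{C}\mathbf{v})=(\mathbf{C}\mathbf{v})\cdot\mathbf{z}=\mathbf{v}\cdot(\mathbf{C}^t\mathbf{z})=\vartheta_{\mathbf{C}^t\mathbf{z}}(\mathbf{v}),\qquad \mathbf{v}\in\mathbb{Z}^d,$$
together with its reduction modulo $p^n$.

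\emph{Part (2).} This is exactly Proposition \ref{Proposition_minimal_and_free_condition_for_speed_matrix_C}(2). Its hypotheses hold because $X_{\mathbf{z}}$ is free: the corollary following Proposition \ref{Proposition_construction_of_algebraic_model} gives freeness since $\mathbf{z}$ is $\mathbb{Z}$-independent, and freeness is equivalent to $\cap_n G_{\mathbf{z},n}=\{\mathbf{0}\}$.

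\emph{Part (1).} By Proposition \ref{Proposition_minimal_and_free_condition_for_speed_matrix_C}(1), minimality is equivalent to $\mathbf{C}\mathbb{Z}^d+G_{\mathbf{z},n}=\mathbb{Z}^d$ for all $n$. The map $\vartheta^{(n)}_{\mathbf{z}}$ is surjective onto $\mathbb{Z}/p^n\mathbb{Z}$ because $v_p(\mathbf{z})=0$, and its kernel is $G_{\mathbf{z},n}$. Hence the condition is equivalent to $\vartheta^{(n)}_{\mathbf{z}}(\mathbf{C}\mathbb{Z}^d)=\mathbb{Z}/p^n\mathbb{Z}$. By the identity above, this says $\vartheta^{(n)}_{\mathbf{C}^t\mathbf{z}}$ is surjective for every $n$. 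By the remark before Proposition \ref{Proposition_construction_of_algebraic_model}, that holds exactly when $v_p(\mathbf{C}^t\mathbf{z})=0$. Note that $\vartheta^{(n)}_{\mathbf{C}^t\mathbf{z}}$ has image $p^{v}\mathbb{Z}/p^n\mathbb{Z}$ with $v=v_p(\mathbf{C}^t\mathbf{z})$, which gives the claimed equivalence.

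\emph{Part (3).} Set $\mathbf{w}=\mathbf{C}^t\mathbf{z}$. Proposition \ref{Proposition_minimal_and_free_condition_for_speed_matrix_C}(3) gives a conjugacy with $X_{\mathfrak{H}}$, where $H_n=\{\mathbf{v}\in\mathbb{Z}^d:\mathbf{C}\mathbf{v}\in G_{\mathbf{z},n}\}$. By the identity, $\mathbf{C}\mathbf{v}\in G_{\mathbf{z},n}$ if and only if $\vartheta_{\mathbf{z}}(\mathbf{C}\mathbf{v})\equiv0\pmod{p^n}$, if and only if $\vartheta_{\mathbf{w}}(\mathbf{v})\equiv 0\pmod{p^n}$. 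So $H_n=G_{\mathbf{w},n}$, hence $\mathfrak{H}=\mathfrak{G}_{\mathbf{w}}$ and $X_{\mathfrak{H}}=X_{\mathbf{w}}$.

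As a sanity check, and to make the result usable later, one can also show $\mathbf{w}$ is $\mathbb{Z}$-independent with $v_p(\mathbf{w})=0$. If $\boldsymbol\lambda\cdot\mathbf{C}^t\mathbf{z}=0$ with $\boldsymbol\lambda\in\mathbb{Z}^d$, then $(\mathbf{C}\boldsymbol\lambda)\cdot\mathbf{z}=0$, so $\mathbf{C}\boldsymbol\lambda=\mathbf{0}$ by independence of $\mathbf{z}$, and $\boldsymbol\lambda=\mathbf{0}$ since $\det\mathbf{C}\neq0$. By Proposition \ref{Proposition_construction_of_algebraic_model}, $X_{\mathbf{w}}$ is then conjugate to $(\mathbb{Z}_p,T_{\mathbf{w}})$.

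\emph{Main obstacle.} The only delicate point is that Proposition \ref{Proposition_minimal_and_free_condition_for_speed_matrix_C}(3) is itself only sketched. If a self-contained argument is needed, I would first try a direct route: on $\mathbb{Z}_p$ the speedup is $S^{\mathbf{v}}x=x+(\mathbf{C}\mathbf{v})\cdot\mathbf{z}=x+\mathbf{v}\cdot\mathbf{w}=T^{\mathbf{v}}_{\mathbf{w}}x$. Thus $S=T_{\mathbf{w}}$ literally on $\mathbb{Z}_p$, and conjugating by $\Phi$ from Proposition \ref{Proposition_construction_of_algebraic_model} (for both $\mathbf{z}$ and $\mathbf{w}$) gives (3) without passing through Johnson and McClendon's argument.
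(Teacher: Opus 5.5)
Your proposal is correct and follows the paper's proof essentially step for step. Part (2) is quoted from Proposition \ref{Proposition_minimal_and_free_condition_for_speed_matrix_C}(2). Part (1) reduces $\mathbf{C}\mathbb{Z}^d+G_{\mathbf{z},n}=\mathbb{Z}^d$ to surjectivity of $\vartheta_{\mathbf{C}^t\mathbf{z}}$ via the identity $\vartheta_{\mathbf{z}}(\mathbf{C}\mathbf{v})=\vartheta_{\mathbf{C}^t\mathbf{z}}(\mathbf{v})$. Part (3) identifies $\mathbb{Z}^d\cap\mathbf{C}^{-1}G_{\mathbf{z},n}$ with $G_{\mathbf{w},n}$ using that same identity.

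Your fallback route is also valid: you transport $S$ by $\Phi$ to get $S=T_{\mathbf{w}}$ on $\mathbb{Z}_p$, then apply Proposition \ref{Proposition_construction_of_algebraic_model} to $\mathbf{w}$, which is legitimate since $v_p(\mathbf{w})=0$ by part (1). This is exactly the content of the paper's later Proposition \ref{Proposition_speedu_of_p_adic_Z^d_odometer_of_adding_type}, and it has the advantage of bypassing the only-sketched Proposition \ref{Proposition_minimal_and_free_condition_for_speed_matrix_C}(3).
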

\begin{proof}
    (1) By Proposition \ref{Proposition_minimal_and_free_condition_for_speed_matrix_C} (1), $(X_{\mathbf{z}},S)$ is minimal if and only if $$\mathbf{C} \mathbb{Z}^d + G_{\mathbf{z},n} = \mathbb{Z}^d, \quad \forall n \geqslant 1.$$
    This is equivalent to $$ \vartheta_{\mathbf{z}}^{(n)} ( \mathbf{C} \mathbb{Z}^d) =\vartheta_{\mathbf{z}}^{(n)} (\mathbb{Z}^d)= \mathbb{Z}/ p^n \mathbb{Z}, \quad \forall n \geqslant 1.$$ 
    That is, $\vartheta_{\mathbf{z}} ( \mathbf{C} \mathbb{Z}_p^d) = \mathbb{Z}_p$.
    Note that $$\vartheta_{\mathbf{z}} (\mathbf{Cv})=(\mathbf{Cv}) \cdot \mathbf{z}= \mathbf{v} \cdot (\mathbf{C}^t \mathbf{z})=\vartheta_{\mathbf{C}^t \mathbf{z}}(\mathbf{v}), \quad \forall \mathbf{v} \in \mathbb{Z}_p^d.$$
    Thus, we see $\vartheta_{\mathbf{z}} ( \mathbf{C} \mathbb{Z}_p^d)= \vartheta_{\mathbf{C}^t \mathbf{z}}(\mathbb{Z}_p^d)$.
    The first assertion follows from the fact that $\vartheta_{\mathbf{C}^t \mathbf{z}}(\mathbb{Z}_p^d)= \mathbb{Z}_p$ if and only if $\mathbf{C}^t \mathbf{z} \notin p \mathbb{Z}_p^d$.
    
    (2) See Proposition \ref{Proposition_minimal_and_free_condition_for_speed_matrix_C} (2).

    (3) According to Proposition \ref{Proposition_minimal_and_free_condition_for_speed_matrix_C} (3), it suffices to check that $G_{\mathbf{w},n}= \mathbb{Z}^d \cap \mathbf{C}^{-1} G_{\mathbf{z},n} $ for all $n \geqslant 1$.
    Firstly, take $\mathbf{C}^{-1} \mathbf{v} \in \mathbb{Z}^d \cap \mathbf{C}^{-1} G_{\mathbf{z},n}$ with $\mathbf{v} \in G_{\mathbf{z},n}$.
    Then we see $$\vartheta_{\mathbf{w}}(\mathbf{C}^{-1} \mathbf{v})= \mathbf{w} \cdot (\mathbf{C}^{-1} \mathbf{v}) = (\mathbf{C}^t \mathbf{z}) \cdot (\mathbf{C}^{-1} \mathbf{v}) = \mathbf{z} \cdot \mathbf{v} \equiv 0  \pmod{p^n}. $$
    It means $\mathbf{C}^{-1} \mathbf{v} \in G_{\mathbf{w},n}$ and hence $\mathbb{Z}^d \cap \mathbf{C}^{-1} G_{\mathbf{z},n} \subset G_{\mathbf{w},n}$.

    Conversely, for $\mathbf{u} \in G_{\mathbf{w},n}$, we put $\mathbf{u}':=\mathbf{Cu}$.
    Note that $$\vartheta_{\mathbf{z}}(\mathbf{u}')= \mathbf{z} \cdot \mathbf{u}'= \mathbf{z} \cdot \mathbf{Cu}= \mathbf{w} \cdot \mathbf{u} \equiv 0 \pmod{p^n}.$$
    Thus $\mathbf{u}' \in G_{\mathbf{z},n}$ and hence $\mathbf{u}=\mathbf{C}^{-1} \mathbf{u}' \in \mathbb{Z}^d \cap \mathbf{C}^{-1} G_{\mathbf{z},n}$.
\end{proof}

\subsection{Proof of the main result}

We will prove Theorem A in this subsection.
To begin with, we have the following easy fact.

\begin{proposition} \label{Proposition_speedu_of_p_adic_Z^d_odometer_of_adding_type}
    Let $\mathbf{z} \in \mathbb{Z}_p^d$ be $\mathbb{Z}$-independent with $v_p(\mathbf{z})=0$.
    Let $S: \mathbb{Z}^d \curvearrowright \mathbb{Z}_p$ be a constant speedup of $(\mathbb{Z}_p, T_{\mathbf{z}})$ with speed $\mathbf{C} \in \mathrm{Mat}_{d \times d}(\mathbb{Z})$.
    Then $S=T_{\mathbf{w}}$ with $\mathbf{w}:=\mathbf{C}^t\mathbf{z}$.
    Consequently, $(\mathbb{Z}_p, S)$ is minimal if and only if $v_p(\mathbf{C}^t\mathbf{z})=0$;
     $(\mathbb{Z}_p, S)$ is free if and only if $\mathbf{C}^t\mathbf{z}$ is $\mathbb{Z}$-independent, which happens precisely when $\det(\mathbf{C}) \neq 0$.
\end{proposition}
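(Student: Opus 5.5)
The plan is to compute the speedup directly from the definitions and then reduce each consequence to the criteria for minimality and freeness of adding-type actions recalled in the introduction. The constant speedup cocycle with speed $\mathbf{C}$ is $\kappa_{\mathbf{C}}(x,\mathbf{v})=\mathbf{C}\mathbf{v}$. Hence for $x\in\mathbb{Z}_p$ and $\mathbf{v}\in\mathbb{Z}^d$ we get
$$S^{\mathbf{v}}(x)=T_{\mathbf{z}}^{\mathbf{C}\mathbf{v}}(x)=x+(\mathbf{C}\mathbf{v})\cdot\mathbf{z}=x+\mathbf{v}\cdot(\mathbf{C}^t\mathbf{z})=T^{\mathbf{v}}_{\mathbf{w}}(x).$$
This is the same transpose identity used in the proof of Corollary \ref{Corollary_computation_of_speedup_of_p_adic_Z^d_odometer}(1), and it gives $S=T_{\mathbf{w}}$ with $\mathbf{w}=\mathbf{C}^t\mathbf{z}$.

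For minimality, I would apply the criterion stated in the introduction: $(\mathbb{Z}_p,T_{\mathbf{w}})$ is minimal if and only if $v_p(\mathbf{w})=0$. For completeness I would sketch why. If some $w_i$ is a unit, then the orbit of $x$ contains $x+\mathbb{Z}w_i$, and $\mathbb{Z}w_i$ is dense in $\mathbb{Z}_p$. Conversely, if $p\mid w_i$ for all $i$, then the orbit of $x$ stays in $x+p\mathbb{Z}_p$, so it is not dense. For $\mathbf{w}=\mathbf{0}$ the convention $v_p(\mathbf{0})=+\infty$ makes the statement consistent, since then $S$ is trivial and not minimal.

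For freeness, the first claim is that $T_{\mathbf{w}}$ is free iff $\mathbf{w}$ is $\mathbb{Z}$-independent. Indeed, $T^{\mathbf{v}}_{\mathbf{w}}x=x$ holds iff $\mathbf{v}\cdot\mathbf{w}=0$, and this condition does not depend on $x$. It remains to show that $\mathbf{C}^t\mathbf{z}$ is $\mathbb{Z}$-independent iff $\det\mathbf{C}\neq0$, which is the only step needing a little care.

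\emph{If $\det\mathbf{C}=0$}, pick a nonzero $\mathbf{v}\in\mathbb{Z}^d$ with $\mathbf{C}\mathbf{v}=\mathbf{0}$; rational entries can be cleared to integers. Then $\mathbf{v}\cdot\mathbf{w}=(\mathbf{C}\mathbf{v})\cdot\mathbf{z}=0$, so $\mathbf{w}$ is dependent.

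\emph{If $\det\mathbf{C}\neq0$} and $\mathbf{v}\cdot\mathbf{w}=0$ with $\mathbf{v}\in\mathbb{Z}^d$, then $(\mathbf{C}\mathbf{v})\cdot\mathbf{z}=0$ with $\mathbf{C}\mathbf{v}\in\mathbb{Z}^d$. The $\mathbb{Z}$-independence of $\mathbf{z}$ forces $\mathbf{C}\mathbf{v}=\mathbf{0}$, and invertibility of $\mathbf{C}$ over $\mathbb{Q}$ then gives $\mathbf{v}=\mathbf{0}$.

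Alternatively, the last equivalence follows from Proposition \ref{Proposition_free_and_nonvanishing}: $\kappa_{\mathbf{C}}$ is non-vanishing iff $\ker\mathbf{C}\cap\mathbb{Z}^d=\{\mathbf{0}\}$ iff $\det\mathbf{C}\neq0$, using that $T_{\mathbf{z}}$ is free.

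No step is a genuine obstacle; the only care needed is in passing from rational to integer kernel vectors.
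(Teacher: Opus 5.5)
Your proposal is correct and follows the paper's proof: the paper also computes $S^{\mathbf{n}}(x)=T_{\mathbf{z}}^{\mathbf{C}\mathbf{n}}(x)=x+\mathbf{n}\cdot\mathbf{C}^t\mathbf{z}=T_{\mathbf{w}}^{\mathbf{n}}(x)$ and then declares the minimality and freeness consequences ``clear.'' You additionally spell those consequences out correctly, including where $\mathbb{Z}$-independence of $\mathbf{z}$ is needed for the $\det(\mathbf{C})\neq 0$ direction.
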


\begin{proof}
    For $\mathbf{n} \in \mathbb{Z}^d$ and $x \in \mathbb{Z}_p$, we see $$S^{\mathbf{n}}(x)= T_{\mathbf{z}}^{\mathbf{Cn}}(x)=x+\mathbf{(Cn) \cdot z}=x+\mathbf{n} \cdot \mathbf{C}^t \mathbf{z}= T_{\mathbf{w}}^{\mathbf{n}}(x).$$
    So we have $S=T_{\mathbf{w}}$ and the remaining assertions are then all clear.
\end{proof}

For a prime $p$ and a positive integer $d$, we put 
\begin{equation*}
\mathcal{E}_{p,d}:=
\left\{ (\mathbf{z}, \mathbf{C}) \in \mathbb{Z}_p^d \times \mathrm{Mat}_{d \times d}(\mathbb{Z}) \;\middle|\;
\begin{aligned}
    &v_p(\mathbf{z})=v_p(\mathbf{C}^t \mathbf{z})=0;\\
    &\det(\mathbf{C}) \neq 0\ \mathrm{and}\ \mathbf{z}\ \mathrm{ is }\ \mathbb{Z}\mathrm{-independent}; \\
    & \mathbf{C}^t \mathbf{z} \neq  \lambda \mathbf{Pz}, \ \forall \lambda \in \mathbb{Q}_p^{\times}, \mathbf{P} \in \mathrm{GL}_d(\mathbb{Q})\ \mathrm{with} \ \det(\mathbf{P})=\pm1.
\end{aligned}
 \right\}.
\end{equation*}
We sometimes simply write $\mathcal{E}$ or $\mathcal{E}_d$ for $\mathcal{E}_{p,d}$.

In view of Theorem \ref{Theorem_necessary_condition_for_continuous_orbit_equivalence} and Proposition \ref{Proposition_speedu_of_p_adic_Z^d_odometer_of_adding_type}, we know Conjecture \ref{Conjecture_continuousorbit_equivalence_for_free_speedups} is false if $\mathcal{E}$ is non-empty.
Our main result (i.e., Theorem A or Theorem \ref{Theorem_main_theorem_residuality_of_exception_set} below) says that $\mathcal{E}$ indeed has at least one (somehow) large section for each $d \geqslant 2$.
To be more precise, for $\mathbf{C} \in \mathrm{Mat}_{d \times d}(\mathbb{Z})$ with $\det(\mathbf{C}) \neq 0$, we put $\mathcal{E}^{\mathbf{C}}_{p,d}:=\{\mathbf{z} \in \mathbb{Z}_p^d: (\mathbf{z,C}) \in \mathcal{E}_{p,d} \}$.
The rest of this subsection aims to find a good $\mathbf{C}$ such that the section $\mathcal{E}^{\mathbf{C}}_{p,d}$ is (somehow) large.
To do this, we have to deal with the three conditions proposed in the definition of $\mathcal{E}_{p,d}$.

We first handle the last condition.
For $\mathbf{C} \in \mathrm{Mat}_{d \times d}(\mathbb{Z})$ with $\det(\mathbf{C}) \neq 0$, we put
$$\widehat{\mathcal{E}}_{p,d}^{\mathbf{C}}:=\left\{ \mathbf{z} \in \mathbb{Q}_p^d : \mathbf{C}^t \mathbf{z} \neq  \lambda \mathbf{Pz}, \ \forall \lambda \in \mathbb{Q}_p^{\times}, \mathbf{P} \in \mathrm{GL}_d(\mathbb{Q})\ \mathrm{with} \ \det(\mathbf{P})= \pm1 \right\}.$$

\begin{lemma} \label{Lemma_residuality_of_absence_of_necessary_condidtion}
    Let $\mathbf{C} \in \mathrm{Mat}_{d \times d}(\mathbb{Z})$ with $\det(\mathbf{C}) \neq 0$.
    Then $\widehat{\mathcal{E}}_{p,d}^{\mathbf{C}} \neq \varnothing$ if and only if $\mathbf{C}^t \neq \lambda \mathbf{P}$ for all $\lambda \in \mathbb{Q}^{\times}$ and $\mathbf{P} \in \mathrm{GL}_d(\mathbb{Q})$ with $\det(\mathbf{P})= \pm1$.
    Moreover, when $\widehat{\mathcal{E}}_{p,d}^{\mathbf{C}} \neq \varnothing$, it is residual in $\mathbb{Q}_p^d$; i.e., it contains a countable intersection of open dense subsets of $\mathbb{Q}_p^d$.
\end{lemma}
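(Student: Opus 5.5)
The plan is to write the complement of $\widehat{\mathcal{E}}^{\mathbf{C}}_{p,d}$ as a countable union of pieces, each indexed by one rational matrix, and to identify each piece explicitly:
$$\mathbb{Q}_p^d\setminus \widehat{\mathcal{E}}^{\mathbf{C}}_{p,d}=\bigcup_{\mathbf{P}} V_{\mathbf{P}},\qquad V_{\mathbf{P}}:=\{\mathbf{z}\in\mathbb{Q}_p^d:\ \mathbf{C}^t\mathbf{z}\in \mathbb{Q}_p^{\times}\mathbf{P}\mathbf{z}\}.$$
The union runs over the countable set of $\mathbf{P}\in\mathrm{GL}_d(\mathbb{Q})$ with $\det\mathbf{P}=\pm1$. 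Since $\mathbf{C}$ is invertible, $\mathbf{z}=\mathbf{0}$ lies in every $V_{\mathbf{P}}$, and for $\mathbf{z}\neq\mathbf{0}$ the scalar $\lambda$ is automatically nonzero. Put $\mathbf{M}:=\mathbf{P}^{-1}\mathbf{C}^t\in\mathrm{GL}_d(\mathbb{Q})$. Then
$$V_{\mathbf{P}}=\{\mathbf{0}\}\cup\bigcup_{\lambda\in\mathbb{Q}_p^\times}\ker_{\mathbb{Q}_p}(\mathbf{M}-\lambda\mathbf{I}),$$
which is the union of the $\mathbb{Q}_p$-eigenspaces of the rational matrix $\mathbf{M}$. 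Because $\mathbf{M}$ has at most $d$ eigenvalues in $\mathbb{Q}_p$, $V_{\mathbf{P}}$ is a finite union of $\mathbb{Q}_p$-linear subspaces. So the whole question reduces to deciding when such a subspace can be all of $\mathbb{Q}_p^d$.

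\emph{Key dichotomy.} A finite union of proper subspaces of $\mathbb{Q}_p^d$ is closed and has empty interior. Each subspace is closed, and a proper subspace has empty interior because any open set contains a small ball around a point, and such a ball spans $\mathbb{Q}_p^d$. A finite union of closed nowhere dense sets is again nowhere dense. Consequently $V_{\mathbf{P}}$ is either all of $\mathbb{Q}_p^d$ or closed nowhere dense. It equals $\mathbb{Q}_p^d$ exactly when one eigenspace is the whole space. Indeed, $\mathbb{Q}_p^d$ is not a finite union of proper subspaces, since $\mathbb{Q}_p$ is infinite (or by Baire). This happens if and only if $\mathbf{M}=\lambda\mathbf{I}$ for some $\lambda\in\mathbb{Q}_p^\times$. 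Since $\mathbf{M}$ has rational entries, necessarily $\lambda\in\mathbb{Q}^\times$, and then $\mathbf{C}^t=\lambda\mathbf{P}$.

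\emph{Conclusion of both parts.} If $\mathbf{C}^t=\lambda\mathbf{P}$ for some rational $\lambda$ and some $\mathbf{P}$ with $\det\mathbf{P}=\pm1$, then $\mathbf{C}^t\mathbf{z}=\lambda\mathbf{P}\mathbf{z}$ for every $\mathbf{z}$, so $\widehat{\mathcal{E}}^{\mathbf{C}}_{p,d}=\varnothing$. Conversely, if no such identity holds, every $V_{\mathbf{P}}$ is closed nowhere dense. Hence $\widehat{\mathcal{E}}^{\mathbf{C}}_{p,d}=\bigcap_{\mathbf{P}}(\mathbb{Q}_p^d\setminus V_{\mathbf{P}})$ is a countable intersection of open dense sets, i.e.\ it is residual. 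Because $\mathbb{Q}_p^d$ is a complete metric space, the Baire category theorem then shows this intersection is dense, in particular nonempty. This gives both the equivalence and the residuality statement.

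The step needing the most care is the dichotomy for $V_{\mathbf{P}}$. One must identify it correctly as the union of the eigenspaces of $\mathbf{P}^{-1}\mathbf{C}^t$. It is also essential that the scalar $\lambda$ is forced to be rational once the eigenspace is everything, since this is where the hypothesis with $\lambda\in\mathbb{Q}^\times$ rather than $\mathbb{Q}_p^\times$ enters.
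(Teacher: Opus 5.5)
Your proof is correct and follows essentially the same route as the paper. Like the paper, you write the complement of $\widehat{\mathcal{E}}^{\mathbf{C}}_{p,d}$ as a countable union, over $\mathbf{P}$, of finitely many $\mathbb{Q}_p$-eigenspaces of $\mathbf{P}^{-1}\mathbf{C}^t$, and you observe that such an eigenspace fills $\mathbb{Q}_p^d$ only when $\mathbf{P}^{-1}\mathbf{C}^t$ is a (necessarily rational) scalar matrix, so otherwise each piece is closed and nowhere dense; your explicit use of the Baire category theorem for nonemptiness just spells out what the paper leaves implicit in ``residual and in particular non-empty.''
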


\begin{proof}
    For each fixed $\mathbf{P} \in \mathrm{GL}_d(\mathbb{Q})$ with $\det(\mathbf{P})= \pm1$, we see that 
    $$\left\{ \mathbf{z} \in \mathbb{Q}_p^d  :\ \exists \lambda \in \mathbb{Q}_p^{\times} \ \mathrm{s.t.} \ \mathbf{C}^t \mathbf{z} =  \lambda \mathbf{Pz} \right\}= \{\mathbf{0}\} \cup \bigcup_{\lambda \in \mathrm{Spec}_{\mathbb{Q}_p}(\mathbf{P}^{-1} \mathbf{C}^t)}\mathrm{Eig}_{\mathbb{Q}_p}(\lambda, \mathbf{P}^{-1} \mathbf{C}^t),$$
    where $\mathrm{Spec}_{\mathbb{Q}_p}(\mathbf{P}^{-1} \mathbf{C}^t)$ is the set of all eigenvalues of $\mathbf{P}^{-1} \mathbf{C}^t$ in $\mathbb{Q}_p$ and $\mathrm{Eig}_{\mathbb{Q}_p}(\lambda, \mathbf{P}^{-1} \mathbf{C}^t)$ is the eigenspace of $\mathbf{P}^{-1} \mathbf{C}^t$ in $\mathbb{Q}_p^d$ w.r.t. the eigenvalue $\lambda$.
    So we have $$\widehat{\mathcal{E}}_{p,d}^{\mathbf{C}}=  \mathbb{Q}_p^d \setminus \left(\{\mathbf{0}\} \cup \bigcup_{\substack{\mathbf{P} \in \mathrm{GL}_d(\mathbb{Q}),\\ \det(\mathbf{P})=\pm 1}} \bigcup_{\lambda \in \mathrm{Spec}_{\mathbb{Q}_p}(\mathbf{P}^{-1} \mathbf{C}^t)}\mathrm{Eig}_{\mathbb{Q}_p}(\lambda, \mathbf{P}^{-1} \mathbf{C}^t) \right).$$

    Suppose $\mathbf{C}^t = \lambda_0 \mathbf{P}_0$ for some $\lambda_0 \in \mathbb{Q}^{\times}$ and $\mathbf{P}_0 \in \mathrm{GL}_d(\mathbb{Q})$ with $\det(\mathbf{P}_0)= \pm1$.
    We see that $\mathbf{P}_0^{-1}\mathbf{C}^t = \lambda_0 I_{d}$ where $I_d$ is the $d \times d$ identity matrix.
    Hence, $$\mathrm{Eig}_{\mathbb{Q}_p}(\lambda_0, \mathbf{P}_0^{-1} \mathbf{C}^t)= \mathrm{Eig}_{\mathbb{Q}_p}(\lambda_0, \lambda_0 I_d)= \mathbb{Q}_p^d.$$
    It follows that $\widehat{\mathcal{E}}_{p,d}^{\mathbf{C}} = \varnothing$.

    Now suppose $\mathbf{C}^t \neq \lambda \mathbf{P}$ for all $\lambda \in \mathbb{Q}^{\times}$ and $\mathbf{P} \in \mathrm{GL}_d(\mathbb{Q})$ with $\det(\mathbf{P})= \pm1$.
    Since both $\mathbf{C}$ and $\mathbf{P}$ are over $\mathbb{Q}$, we see every $\mathbf{P}^{-1} \mathbf{C}^t$ is not a scalar matrix and hence $\mathrm{Eig}_{\mathbb{Q}_p}(\lambda, \mathbf{P}^{-1} \mathbf{C}^t)$ is a proper $\mathbb{Q}_p$-linear subspace of $\mathbb{Q}_p^d$ which is closed and has empty interior.
    Moreover, $\# \mathrm{Spec}_{\mathbb{Q}_p}(\mathbf{P}^{-1} \mathbf{C}^t) \leqslant d$ and there are only countably many rational $d \times d$ matrices over $\mathbb{Q}$.
    Hence, the complement of $\widehat{\mathcal{E}}_{p,d}^{\mathbf{C}}$ is a countable union of closed subsets with empty interiors.
    So $\widehat{\mathcal{E}}_{p,d}^{\mathbf{C}}$ is residual and is in particular non-empty.
\end{proof}

\begin{corollary}
    When $d=1$, we have $\widehat{\mathcal{E}}_{p,1}^{\mathbf{C}} = \varnothing$ for all $\mathbf{C} \in \mathrm{Mat}_{1 \times 1}(\mathbb{Z})$ with $\det(\mathbf{C}) \neq 0$.
\end{corollary}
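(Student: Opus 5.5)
By Lemma \ref{Lemma_residuality_of_absence_of_necessary_condidtion}, it suffices to show that for $d=1$ every $\mathbf{C}=(c)$ with $c \neq 0$ has the form $\mathbf{C}^t = \lambda \mathbf{P}$ for some $\lambda \in \mathbb{Q}^{\times}$ and some $\mathbf{P} \in \mathrm{GL}_1(\mathbb{Q})$ with $\det(\mathbf{P}) = \pm 1$. In dimension one, $\mathbf{C}^t = (c)$, a nonzero integer viewed as a $1\times 1$ rational matrix. I will take $\lambda := c \in \mathbb{Q}^{\times}$ and $\mathbf{P} := (1) = I_1$, which has determinant $1$. Then $\mathbf{C}^t = \lambda \mathbf{P}$ holds trivially, and the "only if" direction of Lemma \ref{Lemma_residuality_of_absence_of_necessary_condidtion} gives $\widehat{\mathcal{E}}_{p,1}^{\mathbf{C}} = \varnothing$.

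As a sanity check, the claim can also be verified directly from the definition, without the lemma. For any $z \in \mathbb{Q}_p$, choose $\lambda = c \in \mathbb{Q}_p^{\times}$ and $\mathbf{P} = (1)$; then $\mathbf{C}^t z = cz = \lambda \mathbf{P} z$. Hence no $z$ lies in $\widehat{\mathcal{E}}_{p,1}^{\mathbf{C}}$, including $z = 0$.

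There is no real obstacle here. The only point needing care is that every nonzero $1\times 1$ matrix is a scalar matrix, so the dichotomy in the lemma always falls on the empty side. This explains why Theorem A requires $d \geqslant 2$.
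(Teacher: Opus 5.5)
Your proposal is correct and follows the paper's own route: the paper states this corollary without proof as an immediate consequence of Lemma \ref{Lemma_residuality_of_absence_of_necessary_condidtion}. For $d=1$ one has $\mathbf{C}^t = c\cdot(1)$ with $c\in\mathbb{Q}^{\times}$ and $\det(1)=1$, so the lemma gives $\widehat{\mathcal{E}}_{p,1}^{\mathbf{C}}=\varnothing$. Your extra check straight from the definition, taking $\lambda=c$ and $\mathbf{P}=(1)$ for every $z\in\mathbb{Q}_p$ including $z=0$, is also valid and does not need the lemma.
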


We now deal with the $\mathbb{Z}$-independence condition.
Recall that $\mathbb{Z}$-independence is nothing but $\mathbb{Q}$-independence.
For a prime $p$ and a positive integer $d$, put $$\mathcal{D}_{p,d}:=\{\mathbf{z}=(z_1,z_2,...,z_d)^t \in \mathbb{Q}_p^d: z_1,z_2,...,z_d \ \mathrm{are} \ \mathbb{Q} \mathrm{-independent}\}.$$

\begin{lemma} \label{Lemma_residuality_for_independence_condition}
    The set $\mathcal{D}_{p,d}$ is residual in $\mathbb{Q}_p^d$.
\end{lemma}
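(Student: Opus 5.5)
The plan is to write the complement of $\mathcal{D}_{p,d}$ as a countable union of closed sets with empty interior, exactly as in the proof of Lemma \ref{Lemma_residuality_of_absence_of_necessary_condidtion}, and then invoke the Baire category theorem. The ambient space $\mathbb{Q}_p^d$ is a complete metric space, so this will also give density.

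By definition, $\mathbf{z}\notin\mathcal{D}_{p,d}$ precisely when there is a nonzero $\boldsymbol{\lambda}=(\lambda_1,\dots,\lambda_d)^t\in\mathbb{Z}^d$ with $\boldsymbol{\lambda}\cdot\mathbf{z}=\sum_i\lambda_iz_i=0$. Hence
$$\mathbb{Q}_p^d\setminus\mathcal{D}_{p,d}=\bigcup_{\boldsymbol{\lambda}\in\mathbb{Z}^d\setminus\{\mathbf{0}\}}\ker\vartheta_{\boldsymbol{\lambda}},$$
where $\ker\vartheta_{\boldsymbol{\lambda}}$ denotes the hyperplane $\{\mathbf{z}\in\mathbb{Q}_p^d:\boldsymbol{\lambda}\cdot\mathbf{z}=0\}$. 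This union is over a countable index set.

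Next we check the two properties of each hyperplane $\ker\vartheta_{\boldsymbol{\lambda}}$.
\begin{enumerate}
\item It is closed, because the map $\mathbf{z}\mapsto\boldsymbol{\lambda}\cdot\mathbf{z}$ is continuous on $\mathbb{Q}_p^d$.
\item It has empty interior. Pick an index $i$ with $\lambda_i\neq0$. For any $\mathbf{z}$ in the hyperplane, the points $\mathbf{z}+p^n\mathbf{e}_i$ converge to $\mathbf{z}$. They lie outside the hyperplane, since $\boldsymbol{\lambda}\cdot(\mathbf{z}+p^n\mathbf{e}_i)=p^n\lambda_i\neq0$.
\end{enumerate}
Equivalently, each hyperplane is a proper $\mathbb{Q}_p$-linear subspace, the same kind of object as in Lemma \ref{Lemma_residuality_of_absence_of_necessary_condidtion}.

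It follows that $\mathcal{D}_{p,d}$ is a countable intersection of open dense sets, so it is residual. For $d=1$ the union consists only of the sets $\{0\}$, and the same argument applies. I expect no genuine obstacle here. The only point needing care is confirming that the family of hyperplanes is countable, which holds because $\mathbb{Q}$-independence reduces to $\mathbb{Z}$-relations after clearing denominators.
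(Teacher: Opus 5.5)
Your proposal is correct and follows the paper's proof: the paper likewise writes the complement of $\mathcal{D}_{p,d}$ as the countable union of the hyperplanes $\{\mathbf{z}\in\mathbb{Q}_p^d:\mathbf{v}\cdot\mathbf{z}=0\}$, indexed by $\mathbf{v}\in\mathbb{Q}^d\setminus\{\mathbf{0}\}$ rather than your $\mathbb{Z}^d\setminus\{\mathbf{0}\}$, and notes that each is closed with empty interior. Your explicit perturbation $\mathbf{z}+p^n\mathbf{e}_i$ fills in the empty-interior claim that the paper leaves unstated.
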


\begin{proof}
    Note that $$\mathcal{D}_{p,d}=\mathbb{Q}_p^d \setminus \bigcup_{\mathbf{v} \in \mathbb{Q}^d \setminus \{ \mathbf{0}\}}\{ \mathbf{z} \in  \mathbb{Q}_p^d : \mathbf{v \cdot z}=0\},$$
    which is residual in $\mathbb{Q}_p^d$ since each set $\{ \mathbf{z} \in  \mathbb{Q}_p^d : \mathbf{v \cdot z}=0\}$ is a hyperplane in $\mathbb{Q}_p^d$ (and thus is closed and has empty interior).
\end{proof}

We simply write $$\mathcal{Z}_{p,d}:=\mathbb{Z}_p^d \setminus p \mathbb{Z}_p^d = \left\{\mathbf{z} \in \mathbb{Z}_p^d: v_p(\mathbf{z})=0 \right\},$$
which is clearly a non-empty clopen subset of $\mathbb{Z}_p^d$.
Note that $$\mathbb{Q}_p^d =\{ \mathbf{0} \} \cup \bigcup_{k \in \mathbb{Z}} p^{k} \mathcal{Z}_{p,d}. $$
Moreover, for each $\mathbf{C} \in \mathrm{Mat}_{d \times d}(\mathbb{Z})$ with $\det(\mathbf{C}) \neq 0$, we have 
\begin{equation} \label{Equation_computation_of_C_section_of_exception_set}
    \mathcal{E}^{\mathbf{C}}_{p,d}=\widehat{\mathcal{E}}_{p,d}^{\mathbf{C}} \cap \mathcal{D}_{p,d} \cap \mathcal{Z}_{p,d} \cap \mathbf{C}^{-t} \mathcal{Z}_{p,d}.
\end{equation}

Now we are ready to prove Theorem A, our main result, as follows.

\begin{theorem} [Equivalent version of Theorem A] \label{Theorem_main_theorem_residuality_of_exception_set}
    Let $d \geqslant 2$ be an integer and $p$ be a prime. 
    Then there is some $\mathbf{C} \in \mathrm{Mat}_{d \times d}(\mathbb{Z})$ with $\det(\mathbf{C}) \neq 0$ such that $\mathcal{E}^{\mathbf{C}}_{p,d}$ is residual in $\mathcal{Z}_{p,d}$.
    Consequently, $\mathcal{E}_d \neq \varnothing$ for all $d \geqslant 2$. 
\end{theorem}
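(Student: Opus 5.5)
We will use the decomposition \eqref{Equation_computation_of_C_section_of_exception_set}, $\mathcal{E}^{\mathbf{C}}_{p,d}=\widehat{\mathcal{E}}_{p,d}^{\mathbf{C}} \cap \mathcal{D}_{p,d} \cap \mathcal{Z}_{p,d} \cap \mathbf{C}^{-t} \mathcal{Z}_{p,d}$, and choose $\mathbf{C}$ so that each of the four sets is residual in (or contains) $\mathcal{Z}_{p,d}$. The main issue is the last set $\mathbf{C}^{-t}\mathcal{Z}_{p,d}$, which we want to contain $\mathcal{Z}_{p,d}$. This holds if $\mathbf{C}^t$ maps $\mathcal{Z}_{p,d}$ into $\mathcal{Z}_{p,d}$, which is the case when $\mathbf{C}^t$ is invertible over $\mathbb{Z}_p$, i.e. $p\nmid\det(\mathbf{C})$. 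Indeed, then $\mathbf{C}^t\in\mathrm{GL}_d(\mathbb{Z}_p)$ preserves both $\mathbb{Z}_p^d$ and $p\mathbb{Z}_p^d$, hence preserves their difference.

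We therefore need a nonsingular integer matrix $\mathbf{C}$ with $p\nmid\det\mathbf{C}$ such that $\mathbf{C}^t\neq\lambda\mathbf{P}$ for all $\lambda\in\mathbb{Q}^\times$ and all $\mathbf{P}\in\mathrm{GL}_d(\mathbb{Q})$ with $\det\mathbf{P}=\pm1$. The latter condition just says that $|\det\mathbf{C}|$ is not a $d$-th power of a rational number, since $\det(\lambda\mathbf{P})=\pm\lambda^d$. Take a prime $q\neq p$ and set $\mathbf{C}=\mathrm{diag}(q,1,\dots,1)$. Then $\det\mathbf{C}=q$ is prime to $p$. Moreover $q=\pm\lambda^d$ with $\lambda\in\mathbb{Q}$ is impossible for $d\geqslant 2$, since $v_q(\lambda^d)=d\,v_q(\lambda)\neq 1$. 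By Lemma \ref{Lemma_residuality_of_absence_of_necessary_condidtion}, $\widehat{\mathcal{E}}_{p,d}^{\mathbf{C}}$ is then residual in $\mathbb{Q}_p^d$. By Lemma \ref{Lemma_residuality_for_independence_condition}, so is $\mathcal{D}_{p,d}$.

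It remains to pass from residuality in $\mathbb{Q}_p^d$ to residuality in $\mathcal{Z}_{p,d}$. This is routine: $\mathcal{Z}_{p,d}$ is open in $\mathbb{Q}_p^d$, so intersecting a dense open set of $\mathbb{Q}_p^d$ with $\mathcal{Z}_{p,d}$ gives a dense open subset of $\mathcal{Z}_{p,d}$. Alternatively, one can argue directly: the complements are countable unions of closed sets with empty interior (proper subspaces), and their traces on the open set $\mathcal{Z}_{p,d}$ are still closed and nowhere dense there. Hence $\mathcal{E}^{\mathbf{C}}_{p,d}\supset(\widehat{\mathcal{E}}^{\mathbf{C}}_{p,d}\cap\mathcal{D}_{p,d})\cap\mathcal{Z}_{p,d}$ is a countable intersection of dense open subsets of $\mathcal{Z}_{p,d}$.

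Finally, $\mathcal{Z}_{p,d}$ is a nonempty clopen subset of the complete metric space $\mathbb{Z}_p^d$, so it is itself a compact Baire space. Its residual subsets are therefore dense and in particular nonempty, which gives $\mathcal{E}_d\neq\varnothing$. The only step needing care is the choice of $\mathbf{C}$, namely making $\det\mathbf{C}$ a prime $q$ different from $p$. It has to satisfy two constraints at once: $\det\mathbf{C}$ must not be $\pm$ a rational $d$-th power, and $\mathbf{C}^t$ must be a $p$-adic unit matrix. A determinant that is merely a non-$d$-th-power but divisible by $p$ could break the minimality condition $v_p(\mathbf{C}^t\mathbf{z})=0$.
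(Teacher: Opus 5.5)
Your proposal is correct and is essentially the paper's proof. The paper takes $\mathbf{C}=\mathrm{diag}(1,\dots,1,r)$ with $r=2$ (or $r=3$ when $p=2$), which is exactly your choice of a prime $q\neq p$ on the diagonal, and then uses the same decomposition of $\mathcal{E}^{\mathbf{C}}_{p,d}$, the identity $\mathbf{C}^{-t}\mathcal{Z}_{p,d}=\mathcal{Z}_{p,d}$, the determinant obstruction, and the two residuality lemmas. The only cosmetic difference is the step from residuality in $\mathbb{Q}_p^d$ to residuality in $\mathcal{Z}_{p,d}$: the paper appeals to scaling-invariance, whereas your observation that $\mathcal{Z}_{p,d}$ is open in $\mathbb{Q}_p^d$ already suffices. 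You also spell out the final Baire-category step, which the paper leaves implicit.
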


\begin{proof}
    Let $r=3$ if $p=2$ and $r=2$ otherwise.
    We will proceed with the $d \times d$ diagonal matrix $$\mathbf{C}:=\mathrm{diag}(1,1, \cdots,1,r).$$

    Firstly, we see $\mathbf{C}^{-t} \mathcal{Z}_{p,d}= \mathcal{Z}_{p,d}$ since $r \in \mathbb{Z}_p^{\times}$.
    Secondly, by Lemma \ref{Lemma_residuality_for_independence_condition}, $\mathcal{D}_{p,d}$ is residual in $\mathbb{Q}_p^d$ and by definition it is scaling-invariant, meaning that $\lambda \mathcal{D}_{p,d}= \mathcal{D}_{p,d},$ for all $\lambda \in \mathbb{Q}_p^{\times}$.
    So $\mathcal{D}_{p,d} \cap \mathcal{Z}_{p,d}$ is residual in $\mathcal{Z}_{p,d}$.
    Finally, note that $x^d \pm r=0$ has no solution in $\mathbb{Q}$.
    By taking determinants, we see $\mathbf{C}^t \neq \lambda \mathbf{P}$ for all $\lambda \in \mathbb{Q}^{\times}$ and $\mathbf{P} \in \mathrm{GL}_d(\mathbb{Q})$ with $\det(\mathbf{P})=\pm1$.
    According to Lemma \ref{Lemma_residuality_of_absence_of_necessary_condidtion}, $\widehat{\mathcal{E}}_{p,d}^{\mathbf{C}}$ is residual in $\mathbb{Q}_p^d$.
    It is also scaling-invariant by definition.
    So $\widehat{\mathcal{E}}_{p,d}^{\mathbf{C}} \cap \mathcal{Z}_{p,d}$ is residual in $\mathcal{Z}_{p,d}$.

    Now in view of equation (\ref{Equation_computation_of_C_section_of_exception_set}), we see $\mathcal{E}^{\mathbf{C}}_{p,d}$ is the intersection of two residual sets in $\mathcal{Z}_{p,d}$ and thus is residual in $\mathcal{Z}_{p,d}$ as well.
    Hence, $\mathcal{E}^{\mathbf{C}}_{p,d} \neq \varnothing$, so is $\mathcal{E}_d$.
\end{proof}

\begin{remark}
    Indeed, one can show that, $\mathcal{E}^{\mathbf{C}}_{p,d} \neq \varnothing$ if and only if the following two conditions are simultaneously satisfied:

    (1) $\mathbf{C}^t \neq \lambda \mathbf{P}$ for all $\lambda \in \mathbb{Q}^{\times}$ and $\mathbf{P} \in \mathrm{GL}_d(\mathbb{Q})$ with $\det(\mathbf{P})= \pm1$;

    (2) $\mathcal{Z}_{p,d} \cap \mathbf{C}^{-t} \mathcal{Z}_{p,d} \neq \varnothing$.

    \noindent Moreover, when these two conditions hold, $\mathcal{E}^{\mathbf{C}}_{p,d}$ is residual in $\mathcal{Z}_{p,d} \cap \mathbf{C}^{-t} \mathcal{Z}_{p,d}$.
\end{remark}

\section{Discussion}

In general, our condition for continuous orbit equivalence for adding-type $p$-adic $\mathbb{Z}^d$-odometers in Theorem \ref{Theorem_necessary_condition_for_continuous_orbit_equivalence} is not sufficient.
Even for free minimal constant speedups, it is not the only obstruction to continuously orbit equivalence, as the following example indicates.
Here we give a construction for the simple case $d=2$ and we believe that the idea behind our example works more generally.
\begin{example}
    For each prime $p$, we let $\alpha=\alpha_p \in \mathbb{Z}_p$ such that $\alpha$ is transcendental over $\mathbb{Q}$ (such an element always exists since $\mathbb{Z}_p$ is uncountable). 
    Put $\mathbf{z}:=(1, \alpha)^t \in \mathbb{Z}_p^2$ which is clearly $\mathbb{Z}$-independent with $v_p(\mathbf{z})=0$ and $$\mathbf{C}=\mathbf{C}_{q}:= \begin{pmatrix}
        1 & 0 \\
         0 & q^2\\
    \end{pmatrix},$$
    where $q$ is another prime distinct from $p$.
    Then we see $\det(\mathbf{C}) = q^2 \neq 0$ and $v_p(\mathbf{w})=0$ where
     $\mathbf{w}:=\mathbf{C}^t \mathbf{z}=(1, q^2 \alpha)^t.$
    It is then clear that $\mathbf{w}$ is $\mathbb{Z}$-independent and $$\mathbf{w}= q \begin{pmatrix}
        q^{-1} & 0 \\
         0 & q\\
    \end{pmatrix} \mathbf{z}.$$

    So we see $T_{\mathbf{w}}$ is a free minimal constant speedup of $T_{\mathbf{z}}$ (with speed $\mathbf{C}$).
    Moreover, they satisfy the conclusion in Theorem \ref{Theorem_necessary_condition_for_continuous_orbit_equivalence}.
    Now we show that $T_{\mathbf{z}}$ is not continuously orbit equivalent to $T_{\mathbf{w}}$.

    We assume, to the contrary, $T_{\mathbf{z}}$ is continuously orbit equivalent to $T_{\mathbf{w}}$.
    By Lemma \ref{Lemma_continuous_orbit_equivalence_for_Z^d_odometers}, there is $$\mathbf{P}= \begin{pmatrix}
    a_1 & a_2\\
    a_3 & a_4\\
    \end{pmatrix} \in \mathrm{GL}_2(\mathbb{Q})$$ with $\det(\mathbf{P})= \pm 1$ such that
    \begin{equation} \label{Equation_example_1}
        \mathbf{P}\mathcal{H}(\mathbb{Z}_p, T_{\mathbf{z}})=\mathcal{H}(\mathbb{Z}_p, T_{\mathbf{w}}).
    \end{equation}
    By Proposition \ref{Proposition_determining_first_cohomology_group}, we see 
    \begin{equation}\label{Equation_example_2}
        \mathcal{H}(\mathbb{Z}_p, T_{\mathbf{w}}) \subset \mathbb{Z}\left[ \frac{1}{p}\right] \times \mathbb{Z}\left[\frac{1}{p} \right].
    \end{equation}
    Since $\mathbb{Z}^2 \subset \mathcal{H}(\mathbb{Z}_p, T_{\mathbf{z}})$, relations (\ref{Equation_example_1}) and (\ref{Equation_example_2}) imply
    $$ a_1,a_2,a_3,a_4 \in \mathbb{Z}\left[\frac{1}{p} \right].$$
    By the same argument as in the proof of Theorem \ref{Theorem_necessary_condition_for_continuous_orbit_equivalence} (taking $p$-adic closure in relation (\ref{Equation_example_1}) and then considering the unique non-trivial $\mathbb{Q}_p$-subspace), we obtain $$\mathbf{w}= \lambda \mathbf{P} \mathbf{z},$$
    for some $\lambda \in \mathbb{Q}_p^{\times}$.
    That is, by direct computation, we see 
    $$\begin{cases}
        \lambda(a_1+a_2 \alpha)=1, \\
        \lambda(a_3+a_4 \alpha)= q^2 \alpha.\\
    \end{cases}$$
    Eliminating $\lambda$ yields $$a_2 q^2 \alpha^2 + (a_1 q^2 -a_4)\alpha -a_3=0.$$
    Since $\alpha$ is transcendental over $\mathbb{Q}$, we see $$\begin{cases}
        a_2=a_3=0,\\
        a_1 q^2=a_4.
    \end{cases}$$
    Moreover, we see $$\pm 1=\det(\mathbf{P})=a_1a_4-a_2a_3=a_1^2 q^2,$$
    which gives $a_1=\pm q^{-1}$, contradicting the fact that $a_1 \in \mathbb{Z}\left[ \frac{1}{p} \right]$.
    
    Hence, $T_{\mathbf{z}}$ is not continuously orbit equivalent to $T_{\mathbf{w}}$.
\end{example}

This example says that there must be some other (very likely algebraic) obstruction to continuous orbit equivalence during the free minimal constant speedups of adding-type $p$-adic $\mathbb{Z}^d$-odometers.
It is then natural to ask for the mechanism for violating continuous orbit equivalence.

\begin{question}
Let $\mathbf{z}, \mathbf{w} \in \mathbb{Z}_p^d$ be $\mathbb{Z}$-independent with $v_p(\mathbf{z})=v_p(\mathbf{w})=0$.
Are there any (elegant) equivalent conditions in terms of $\mathbf{z}, \mathbf{w}$ for continuous orbit equivalence between the free $\mathbb{Z}^d$-odometers $(\mathbb{Z}_p, T_{\mathbf{z}})$ and $(\mathbb{Z}_p, T_{\mathbf{w}})$?
How about if additionally $T_{\mathbf{w}}$ is a free minimal constant speedup for $T_{\mathbf{z}}$?
\end{question}

\bigskip

\noindent\textbf{Acknowledgments.} 
The author would like to warmly thank Jia-Yan Yao for interesting discussions on the subject. He would like also to heartily thank
the National Natural Science Foundation of China (Grant No.\,12231013) for partial financial support.

\bibliographystyle{amsalpha}
\bibliography{references}

\vskip 1 cm
\begin{tabular}{ll}
Chang-Hua JIAO & \\
Department of Mathematics &  \\
Tsinghua University &  \\
Beijing 100084 &  \\
People's Republic of China &  \\
E-mail: jch23@mails.tsinghua.edu.cn &
\end{tabular}

\end{document}